\title[Fundamental properties of 
basic slc-trivial fibrations, II]
{Fundamental properties of basic slc-trivial fibrations, II} 
\author{Osamu Fujino, Taro Fujisawa, and Haidong Liu}
\date{2020/4/21, version 0.28}
\subjclass[2010]{Primary 14N30; Secondary 14D07, 32G20, 14E30}
\keywords{basic slc-trivial fibrations, variation of mixed Hodge structure, 
b-semi-ampleness conjecture}
\address{Department of Mathematics, Graduate School of Science, 
Osaka University, Toyonaka, Osaka 560-0043, Japan}
\email{fujino@math.sci.osaka-u.ac.jp}
\address{Department of Mathematics, School of Engineering, 
Tokyo Denki University, Tokyo, Japan}
\email{fujisawa@mail.dendai.ac.jp}
\address{Peking University, Beijing International 
Center for Mathematical Research,
Beijing, 100871, China}
\email{hdliu@bicmr.pku.edu.cn}
\DeclareMathOperator{\Gr}{Gr}
\DeclareMathOperator{\Supp}{Supp}
\DeclareMathOperator{\Spec}{Spec}
\DeclareMathOperator{\mult}{mult}
\DeclareMathOperator{\rank}{rank}
\newtheorem{thm}{Theorem}[section]
\newtheorem{lem}[thm]{Lemma}
\newtheorem{conj}[thm]{Conjecture}
\newtheorem{cor}[thm]{Corollary}
\theoremstyle{definition}
\newtheorem{defn}[thm]{Definition}
\newtheorem{rem}[thm]{Remark}
\newtheorem*{ack}{Acknowledgments}  
\newtheorem*{conventions}{Conventions}        
\newtheorem{say}[thm]{}
\newtheorem{step}{Step}
\begin{document}

\begin{abstract}
We prove that if the moduli $\mathbb Q$-b-divisor 
of a basic slc-trivial fibration 
is b-numerically trivial then 
it is $\mathbb Q$-b-linearly trivial. As a consequence, we prove that 
the moduli part of a basic slc-trivial fibration is semi-ample when 
the base space is a curve. 
\end{abstract}

\maketitle 
\tableofcontents

\section{Introduction}\label{B-sec1}

This paper is a continuation of the first author's paper:~\cite{fujino-slc-trivial}. 
We strongly recommend the reader to look 
at \cite[1.~Introduction]{fujino-slc-trivial} before starting to read this paper. 
In \cite{fujino-slc-trivial}, we 
introduced the notion of basic slc-trivial fibrations, which 
is a kind of canonical bundle formula for reducible varieties, and 
investigated some fundamental properties. 
For the precise definition of basic slc-trivial fibrations, 
see \cite[Definition 4.1]{fujino-slc-trivial} or 
Definition \ref{C-def3.1} below. 
The following statement is one of the main results of \cite{fujino-slc-trivial}. 

\begin{thm}[{\cite[Theorem 1.2]{fujino-slc-trivial}}]\label{B-thm1.1}
Let $f\colon (X, B)\to Y$ be a basic slc-trivial fibration and 
let $\mathbf B$ and $\mathbf M$ be the induced 
discriminant and moduli $\mathbb Q$-b-divisors of $Y$ respectively. 
Then we have the following properties: 
\begin{itemize}
\item[(i)] $\mathbf K+\mathbf B$ is $\mathbb Q$-b-Cartier, and 
\item[(ii)] $\mathbf M$ is b-potentially nef, that is,  
there exists a proper birational morphism $\sigma \colon Y'\to Y$ 
from a normal variety $Y'$ such that 
$\mathbf M_{Y'}$ is a potentially nef $\mathbb Q$-divisor on $Y'$ and 
that $\mathbf M=\overline{\mathbf M_{Y'}}$. 
\end{itemize}
\end{thm}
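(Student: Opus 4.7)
The plan is to mimic the proof of the analogous result for klt-trivial fibrations due to Ambro and Kawamata, with variation of pure Hodge structure replaced by admissible variation of mixed Hodge structure, the latter being the natural Hodge-theoretic object attached to an slc degeneration. The proof proceeds by reducing to a sufficiently resolved birational model of the fibration and then identifying the moduli part with a Hodge-theoretic bundle whose positivity is known.

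First I would fix a suitable birational model. After a generically finite base change and repeated log resolution and semi-stable reduction in codimension one, one obtains a commutative diagram with a basic slc-trivial fibration $f'\colon (X',B')\to Y'$ over a smooth variety $Y'$, such that: the discriminant locus of $f'$ together with $\Supp f'_*B'$ is a simple normal crossing divisor $\Sigma$ on $Y'$; $f'$ is semi-stable in codimension one (equivalently, all local monodromies around components of $\Sigma$ are unipotent); and the traces on $Y'$ of the discriminant and moduli b-divisors coincide with those computed from $f'$. The compatibility of the canonical bundle formula under such modifications follows from the standard log canonical threshold interpretation of the discriminant.

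For (i), on this model $\mathbf{B}_{Y'}$ is by construction a $\mathbb{Q}$-linear combination of components of $\Sigma$, hence $K_{Y'}+\mathbf{B}_{Y'}$ is $\mathbb{Q}$-Cartier. One then checks that $\mathbf{K}+\mathbf{B}=\overline{K_{Y'}+\mathbf{B}_{Y'}}$, i.e.\ the formation of the discriminant commutes with further birational pullback; this reduces to a divisor-by-divisor computation of log canonical thresholds on higher models, which is routine once $\Sigma$ is SNC. This yields the $\mathbb{Q}$-b-Cartier property of $\mathbf{K}+\mathbf{B}$, and simultaneously realises $\mathbf{M}$ as the birational closure $\overline{\mathbf{M}_{Y'}}$ of a single $\mathbb{Q}$-divisor $\mathbf{M}_{Y'}$.

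For (ii), the task reduces to proving that $\mathbf{M}_{Y'}$ is potentially nef. The key step is to identify (a positive multiple of) $\mathcal{O}_{Y'}(\mathbf{M}_{Y'})$ with the bottom piece of the Hodge filtration of the canonical extension of an admissible variation of $\mathbb{Q}$-mixed Hodge structure on $Y'\setminus\Sigma$, where this VMHS is extracted from the cohomology of the log-canonical pair along the fibres of $f'$. One then invokes the semi-positivity theorem for the lowest Hodge piece of such a canonical extension; this is precisely the technical heart and the main obstacle of the argument, since, in contrast to the klt case handled by Fujita--Kawamata, one cannot directly apply Kawamata's semi-positivity theorem to a pure VHS. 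Instead one relies on a mixed-Hodge-theoretic semi-positivity result (of the type developed by Fujino--Fujisawa), whose proof requires a detailed asymptotic analysis of the Hodge metric near $\Sigma$ and the use of the weight filtration polarisations provided by the slc structure of the fibres. Granted this semi-positivity, $\mathbf{M}_{Y'}$ is potentially nef and (ii) follows.
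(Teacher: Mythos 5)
This theorem is not proved in the present paper; it is quoted as \cite[Theorem 1.2]{fujino-slc-trivial}, and the only window this paper gives on its proof is the recapitulation in Step \ref{C-step1} of the proof of Theorem \ref{B-thm1.3} (plus the reference to \cite[Sections 6 and 9]{fujino-slc-trivial}). Measured against that, your proposal has the right global shape: pass to a suitable birational model of $(X,B)\to Y$ with SNC discriminant, make a finite base change to kill monodromy, realise (a piece of) $\mathcal O_{Y'}(\mathbf M_{Y'})$ inside the canonical extension of the lowest Hodge filtrand of an admissible VMHS coming from the fibres, and invoke the Fujino--Fujisawa semipositivity theorem for such canonical extensions.

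The one genuine imprecision is in how you pass from the $\mathbb Q$-divisor $\mathbf M_{Y'}$ to a Hodge bundle. Writing ``(a positive multiple of) $\mathcal O_{Y'}(\mathbf M_{Y'})$'' suggests one simply looks at $\mathcal O_{Y'}(a\mathbf M_{Y'})$ sitting inside the lowest Hodge piece, but that is not how it works in \cite{fujino-slc-trivial}. One takes the $b$-fold cyclic cover $\pi\colon \widetilde X\to X$ associated to the relation $b(K_X+B)\sim bf^*(K_Y+B_Y+M_Y)$, resolves $\widetilde X$ to a simple normal crossing $V\to Y$ to which \cite[Theorem 7.1]{fujino-fujisawa} applies, and then $\mathcal O_Y(M_Y)$ (which has become Cartier on the chosen model) is identified with the rank-one $\zeta^{-1}$-eigensheaf of $h_*\omega_{V/Y}\left((B^h_V)^{=1}\right)\simeq\Gr^0_F\bigl((\mathcal V^d_Y)^*\bigr)$ under the Galois action of $G=\mathbb Z/b\mathbb Z$. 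The cyclic covering plus eigensheaf decomposition is the mechanism, not a multiple of the moduli divisor; without it there is no natural line bundle inside the Hodge bundle to point at. Likewise, your treatment of (i) is serviceable but glosses over why the discriminant b-divisor stabilises on the chosen model (one needs the horizontal part of $B$ to be controlled and the relevant log canonical thresholds to be computed on a semistable-in-codimension-one model); this is more than a ``divisor-by-divisor computation,'' although it is indeed where the inversion-of-adjunction style arguments of \cite[Section 9]{fujino-slc-trivial} enter. With those two points sharpened, your plan matches the cited proof.
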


For the definition and some basic properties of b-divisors, 
see \cite[2.3.2 b-divisors]{corti} and \cite[Section 2]{fujino-slc-trivial}. 

\medskip 

On moduli $\mathbb Q$-b-divisors, we have the following conjecture, 
which is still widely open. 

\begin{conj}[{b-semi-ampleness conjecture, 
see \cite[Conjecture 1.4]{fujino-slc-trivial}}]
\label{B-conj1.2} 
Let $f \colon (X, B)\to Y$ be a basic 
slc-trivial fibration. 
Then the moduli $\mathbb Q$-b-divisor $\mathbf M$ is 
b-semi-ample. 
\end{conj}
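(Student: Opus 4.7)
Since Conjecture \ref{B-conj1.2} is flagged as widely open, the plan is to attack the two accessible reductions hinted at in the abstract and then indicate what extra input would be needed in general. The overall target is a dichotomy: either $\mathbf{M}$ is b-numerically trivial, in which case one tries to promote this to $\mathbb{Q}$-b-linear triviality, or $\mathbf{M}$ has some positive intersection against a curve, in which case b-potential nefness from Theorem \ref{B-thm1.1}(ii) is combined with standard semi-ampleness criteria.

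For the b-numerically trivial case, I would first pass to a birational model $\sigma\colon Y' \to Y$ as in Theorem \ref{B-thm1.1}(ii), on which $\mathbf{M}_{Y'}$ is potentially nef with $\mathbf{M}=\overline{\mathbf{M}_{Y'}}$. After further blow-ups I may assume the discriminant locus of $f$ has simple normal crossings support on $Y'$ and that the variation of mixed Hodge structure associated to $f$ has unipotent monodromy along it. The key Hodge-theoretic input would then be: if the first Chern class of the Deligne canonical extension of the relevant graded piece vanishes on every curve in $Y'$, the period map must be constant. This should force isotriviality after a finite \'etale cover of an open subset of $Y'$, so that a multiple of $\mathbf{M}_{Y'}$ descends to a torsion divisor; together with potential nefness this yields $\mathbb{Q}$-b-linear triviality.

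Once this numerical-to-linear step is in hand, the case where $Y$ is a curve is essentially free. On a smooth curve, a potentially nef $\mathbb{Q}$-divisor of positive degree is ample, hence semi-ample, while one of degree zero is b-numerically trivial and therefore b-linearly trivial by the previous step, again semi-ample. Combining this alternative with Theorem \ref{B-thm1.1}(ii) settles the curve case of Conjecture \ref{B-conj1.2}.

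The principal obstacle I expect is the Hodge-theoretic rigidity step in the slc setting. For lc-trivial fibrations, where the general fiber is normal (indeed klt), the needed Arakelov-type inequalities and isotriviality criteria go back to Fujita, Kawamata and Zucker. In our situation the general fiber is only semi-log-canonical, hence typically reducible and non-normal, and the natural Hodge structure on it is genuinely mixed. Identifying the right graded quotient, verifying admissibility of its extension across the discriminant, and proving the positivity/rigidity dichotomy for the resulting variation of mixed Hodge structure is the technical heart of the matter, and is precisely what blocks the higher-dimensional case of the conjecture.
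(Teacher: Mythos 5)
The statement you are addressing is an open conjecture; the paper itself proves it only in two special cases (Theorem \ref{B-thm1.3}: $\mathbf M$ b-numerically trivial; Corollary \ref{B-cor1.4}: $\dim Y=1$), and your proposal correctly identifies these as the accessible cases and correctly derives the curve case from the numerically trivial case via the nef dichotomy on a curve. However, your central Hodge-theoretic step is both too strong and misdirected. Vanishing of $\deg \mathbf M_{Y'}$ does not force the period map to be constant or the family to be isotrivial after an \'etale cover; $\mathcal O_Y(M_Y)$ is only one rank-one direct summand of the canonical extension of the lowest Hodge piece, and the rest of the variation may move freely. What the degree-zero hypothesis actually gives (Lemma \ref{d-lem4.7}, a curvature computation with the Hodge metric on a \emph{curve}) is that this particular line bundle is a flat subbundle, hence a rank-one local subsystem; one then invokes Deligne's theorem on local subsystems of \emph{polarizable variations of pure Hodge structure} to conclude that a multiple is trivial. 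This is also why the paper first cuts down to $\dim Y=1$ by general hyperplane sections (Lemma \ref{C-lem5.1}, via Kodaira vanishing) rather than arguing on a higher-dimensional $Y'$ as you propose.

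Two further ingredients are missing from your sketch and cannot be routed around. First, the only reason $\mathcal O_Y(M_Y)$ appears inside a Hodge-theoretic object at all is the $b$-fold cyclic covering construction: $M_Y$ is realized as a $\zeta^{-1}$-eigensheaf of $h_*\omega_{V/Y}((B_V^h)^{=1})\simeq \Gr^0_F((\mathcal V^d_Y)^*)$ under the Galois action of $G=\mathbb Z/b\mathbb Z$. Second, since the relevant variation is genuinely mixed, Deligne's result does not apply directly; the technical core of the paper (Lemma \ref{x-lem4.6}) shows that the birational $G$-action preserves $W_m\Gr^0_F$, so that the eigensheaf can be located inside a single pure polarizable graded piece $\Gr^W_l$. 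You flag the mixed structure as ``the obstacle blocking the higher-dimensional case,'' but this is a mischaracterization: that difficulty is resolved here in all dimensions for the numerically trivial case; what remains open in general is the case where $\mathbf M$ is nef but neither numerically trivial nor positive enough for your dichotomy to apply, and no argument in your proposal addresses it.
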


The main purpose of this paper is to prove the following theorem. 

\begin{thm}[Main Theorem]\label{B-thm1.3}
Let $f \colon (X, B)\to Y$ be a basic slc-trivial fibration 
such that $Y$ is complete. 
Let $\mathbf M$ be the moduli $\mathbb Q$-b-divisor 
associated to $f \colon (X, B)\to Y$. 
Assume that 
there exists a proper birational 
morphism $\sigma \colon Y'\to Y$ from a normal 
variety $Y'$ such that 
$\mathbf M=\overline {\mathbf M_{Y'}}$ with $\mathbf M_{Y'}\equiv 0$. Then 
$\mathbf M_{Y'}\sim _{\mathbb Q}0$ holds. 
\end{thm}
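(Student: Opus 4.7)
The plan is to reduce to a statement about variations of mixed Hodge structure (VMHS) and invoke a Deligne-type rigidity theorem, in the spirit of Ambro's argument for lc-trivial fibrations, but using the VMHS machinery from the first paper of this series.

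First, I would perform a standard birational reduction. By Hironaka's resolution, replace $Y'$ by a higher smooth projective model $\sigma'\colon Y''\to Y'$. Since $\mathbf M=\overline{\mathbf M_{Y'}}$, one has $\mathbf M_{Y''}=(\sigma')^{\ast} \mathbf M_{Y'}$, so numerical triviality is preserved and it suffices to prove $\mathbf M_{Y''}\sim_{\mathbb Q}0$. After further birational modifications of $X$ above $Y''$ and a finite Galois base change via Kawamata's covering trick, one may arrange that the discriminant locus on the base is a reduced simple normal crossing divisor $D\subset Y''$ and that the local monodromies of the associated VMHS around the components of $D$ are unipotent. Since $\mathbb Q$-linear equivalence descends under finite covers (the pullback map on Picard groups has torsion kernel), passing to such a cover is harmless.

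Next, I would appeal to the Hodge-theoretic interpretation established in \cite{fujino-slc-trivial}: a positive integral multiple $N\mathbf M_{Y''}$ should coincide, up to adjustment over $D$, with the first Chern class of the determinant of the Deligne canonical extension of the top Hodge filtration piece of the admissible polarized VMHS attached to $f$ over $U=Y''\setminus D$. Combined with Theorem~\ref{B-thm1.1}(ii) and the Fujino--Fujisawa semi-positivity theorem, this determinant line bundle is nef on $Y''$; together with $\mathbf M_{Y''}\equiv 0$, its first Chern class vanishes in $N^1(Y'')_{\mathbb R}$.

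The final, and main, step is a rigidity statement: a line bundle on a smooth projective variety, with vanishing first Chern class and arising as the determinant of the canonical extension of a Hodge filtration piece of an admissible polarized VMHS with unipotent monodromy, is torsion in $\mathrm{Pic}$. One argues that the Hodge metric, whose Chern form is semi-positive, must in fact be flat once its integral against a K\"ahler class vanishes; the resulting flat metric defines a unitary monodromy representation of $\pi_1(U)$ which, by admissibility and unipotency, extends to a representation of $\pi_1(Y'')$ with necessarily finite image, yielding a torsion line bundle. I expect the main obstacle to be precisely this rigidity: one must verify that flatness of the Hodge metric extends across the boundary divisor $D$ and that the resulting unitary monodromy has finite image. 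This is the VMHS analogue of Deligne's classical theorem for polarizable VHS and will likely require input from the admissibility theory of Kashiwara and of Steenbrink--Zucker beyond what is developed in the first paper of the series.
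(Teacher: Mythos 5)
Your overall strategy (translate $M_Y$ into Hodge-theoretic data, use nefness plus numerical triviality to get a flat bundle, then invoke a Deligne-type rigidity theorem) is the right one, and your reduction steps (resolution of the base, unipotent reduction by a finite cover, harmlessness of finite base change for $\mathbb Q$-linear triviality) match the paper. But there are two genuine gaps, and the second is exactly the point where the theorem's difficulty lies.

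First, the identification of $M_Y$ is not with the determinant of the top Hodge piece of the VMHS attached to $f$ itself. One must first take the $b$-fold cyclic cover $\pi\colon\widetilde X\to X$ associated to the relation $K_X+B+\frac 1b(\varphi)=f^*(K_Y+B_Y+M_Y)$ and a semi-resolution $(V,B_V)\to\widetilde X$; then $\mathcal O_Y(M_Y)$ is the rank-one \emph{eigensheaf} for the eigenvalue $\zeta^{-1}$ of the $G=\mathbb Z/b\mathbb Z$-action on $h_*\omega_{V/Y}\bigl((B_V^h)^{=1}\bigr)\simeq\Gr^0_F\bigl((\mathcal V^d_Y)^*\bigr)$, not a determinant (a determinant would mix contributions from other eigensheaves of possibly positive degree). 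The paper also reduces to $\dim Y=1$ first, by cutting with general ample divisors and Kodaira vanishing (Lemma \ref{C-lem5.1}), so that the flatness statement (Lemma \ref{d-lem4.7}) only needs to be proved over a curve.

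Second, and more seriously, your final ``rigidity'' step is deferred rather than proved: the flatness of the Hodge metric gives a unitary rank-one local subsystem, but finiteness of its monodromy is precisely the content of Deligne's Corollaire (4.2.8), which is a theorem about \emph{pure} polarizable variations of Hodge structure; there is no off-the-shelf mixed analogue, and you acknowledge this as the main obstacle without resolving it. The paper's new contribution is the mechanism that makes the pure theory applicable: Lemma \ref{x-lem4.6} (via condition $(\bigstar)$ and the semi-simplicial resolutions of Section \ref{d-sec4}) shows that the birational $G$-action preserves the canonical extension of the weight filtration on $\Gr^0_F(\mathcal V^k_Y)$. One then chooses $l$ with $\mathcal O_Y(M_Y)\subset W_l\Gr^0_F\bigl((\mathcal V^d_Y)^*\bigr)$ but $\mathcal O_Y(M_Y)\not\subset W_{l-1}$, so the degree-zero eigensheaf injects into $\Gr^W_l\bigl((\mathcal V^d_{Y^*})^*\bigr)$, which \emph{is} a polarizable pure $\mathbb Q$-Hodge structure, and Deligne's theorem yields $\mathcal O_Y(aM_Y)\simeq\mathcal O_Y$ after canonical extension. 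Without this compatibility of the group action with the weight filtration, your argument does not close.
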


Theorem \ref{B-thm1.3} solves Conjecture \ref{B-conj1.2} 
when the moduli $\mathbb Q$-b-divisor $\mathbf M$ is 
b-numerically trivial.
It is obviously a generalization of \cite[Theorem 3.5]{ambro-moduli} and 
\cite[Theorem 1.3]{floris}. 
More precisely, Florin Ambro and Enrica Floris 
proved Theorem \ref{B-thm1.3} for klt-trivial fibrations and lc-trivial fibrations, 
respectively. 

\medskip

As a direct consequence of Theorem \ref{B-thm1.3}, 
we have the following result:~Corollary \ref{B-cor1.4}. 
It says that the b-semi-ampleness 
conjecture 
(see Conjecture \ref{B-conj1.2}) holds true when the base space is a curve. 
Note that Corollary \ref{B-cor1.4} was already proved for klt-trivial fibrations 
by Florin Ambro (see \cite[Theorem 0.1]{ambro-shokurov}). 

\begin{cor}\label{B-cor1.4}
Let $f \colon (X, B)\to Y$ be a basic slc-trivial fibration 
with $\dim Y=1$. 
Then the moduli $\mathbb Q$-divisor $M_Y$ of $f \colon 
(X, B)\to Y$ is 
semi-ample. 
\end{cor}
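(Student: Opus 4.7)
The plan is to reduce Corollary \ref{B-cor1.4} to a direct application of the Main Theorem (Theorem \ref{B-thm1.3}) via a case split on the degree of $M_Y$. Since $\dim Y=1$ and $Y$ is normal, $Y$ is a smooth curve; as semi-ampleness is naturally a statement about a complete base, I will work under the assumption (after a standard reduction if necessary) that $Y$ is a smooth projective curve.

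First I would invoke Theorem \ref{B-thm1.1}(ii) to obtain a proper birational morphism $\sigma\colon Y'\to Y$ from a normal variety with $\mathbf M=\overline{\mathbf M_{Y'}}$ and $\mathbf M_{Y'}$ a potentially nef $\mathbb Q$-divisor on $Y'$. Because $Y$ is already a smooth curve, every proper birational morphism to $Y$ from a normal variety is an isomorphism, so I may take $Y'=Y$. Hence $M_Y=\mathbf M_Y$ is a potentially nef $\mathbb Q$-divisor on the smooth projective curve $Y$; in particular $\deg M_Y\geq 0$.

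Next I would split into two cases. If $\deg M_Y>0$, then $M_Y$ is an ample $\mathbb Q$-divisor on a smooth projective curve, hence semi-ample, and we are done. If $\deg M_Y=0$, then $M_Y\equiv 0$ on $Y$, which is precisely the hypothesis of Theorem \ref{B-thm1.3} (with $Y'=Y$ and $\sigma=\mathrm{id}_Y$). The conclusion of that theorem gives $M_Y\sim_{\mathbb Q}0$, which is certainly semi-ample.

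The real content of the corollary is packaged in Theorem \ref{B-thm1.3}; the remaining steps are essentially definitional. The only technical point worth verifying is that potentially nef on a smooth projective curve implies non-negative degree, which is an immediate unpacking of the definition in \cite{fujino-slc-trivial}. Given the Main Theorem, there is no further serious obstacle.
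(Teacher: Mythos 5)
Your proposal is correct and follows essentially the same route as the paper: reduce to $Y$ a smooth projective curve, observe $\deg M_Y \geq 0$ from (b-potential) nefness, split on $\deg M_Y > 0$ (ample) versus $\deg M_Y = 0$ (apply Theorem \ref{B-thm1.3} to get $M_Y \sim_{\mathbb Q} 0$). The paper cites \cite[Lemma 4.12]{fujino-slc-trivial} for the initial reduction to a smooth projective curve and quotes nefness of $M_Y$ directly from \cite[Theorem 1.2]{fujino-slc-trivial}, whereas you unwind the b-potentially nef statement via the observation that a proper birational morphism onto a smooth curve is an isomorphism; these are the same argument.
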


For the proof of Theorem \ref{B-thm1.3}, we closely follow Floris's arguments in 
\cite{floris}. We adapt her proof of Theorem \ref{B-thm1.3} for lc-trivial fibrations 
to our setting. As is well known, the 
main ingredient of \cite[Theorem 0.1]{ambro-shokurov}, 
\cite[Theorem 3.5]{ambro-moduli}, and \cite[Theorem 1.3]{floris} 
is Deligne's result on local subsystems of polarizable variations 
of $\mathbb Q$-Hodge structure (see \cite[Corollaire (4.2.8)]{deligne}). 

\medskip 

In \cite{fujino-fujisawa}, the first and the second 
authors discussed variations of mixed 
Hodge structure toward applications for 
higher-dimensional algebraic varieties (see also \cite{ffs}). 
One of the most important 
applications of \cite{fujino-fujisawa} is the proof of the 
projectivity of the coarse moduli spaces of stable 
varieties in \cite{fujino-ann}. Then the first author 
introduced the notion of basic slc-trivial fibrations 
in \cite{fujino-slc-trivial} in order to make results 
in \cite{fujino-fujisawa} useful 
for various geometric applications. The 
first and the third authors established that 
every quasi-log canonical pairs have only 
Du Bois singularities in \cite{fujino-haidong} by using 
\cite{fujino-slc-trivial}. 
We strongly recommend the reader to look 
at \cite[1.~Introduction]{fujino-slc-trivial} for 
more details. In this paper, we prove 
\cite[Conjecture 1.4]{fujino-slc-trivial} under 
some special assumption. We freely use the 
formulation introduced in \cite{fujino-slc-trivial} 
and the 
arguments in this paper heavily depend on \cite{fujino-fujisawa}.

\medskip 

We briefly explain the organization of this paper. 
In Section \ref{A-sec2}, 
we fix the notation and recall some definitions for the reader's convenience. 
In Section \ref{C-sec3}, we quickly recall the notion of basic slc-trivial fibrations 
and some definitions following \cite{fujino-slc-trivial}. 
In Section \ref{d-sec4}, we see that 
the cyclic group action constructed in 
\cite[Section 6]{fujino-slc-trivial} preserves some parts of weight filtrations of 
the variation of mixed Hodge structure. 
Section \ref{C-sec5} is devoted to the proof of Theorem \ref{B-thm1.3}. 
By using the result obtained in Section \ref{d-sec4}, 
we reduce Theorem \ref{B-thm1.3} to 
Deligne's result on local subsystems of polarizable variations 
of $\mathbb Q$-Hodge structure. 

\begin{ack}
The first author was partially 
supported by JSPS KAKENHI Grant Numbers 
JP16H03925, JP16H06337. 
The second author was partially supported by JSPS KAKENHI 
Grant Number JP16K05107. 
The authors would like to thank Takeshi Abe for 
useful discussions and comments. 
\end{ack}

\begin{conventions} 
We work over $\mathbb C$, the complex number field, throughout 
this paper. We freely use the basic 
notation of the minimal model program as in 
\cite{fujino-fundamental} and \cite{fujino-foundations}. 
A {\em{scheme}} means a separated scheme of 
finite type over $\mathbb C$. 
A {\em{variety}} means a reduced scheme, that is, 
a reduced separated scheme of finite type over $\mathbb C$. 
In this paper, a variety may be reducible. 
However, we sometimes assume that a variety is irreducible without 
mentioning it explicitly if there is no danger of confusion. 
The set of integers (resp.~rational numbers) 
is denoted by $\mathbb Z$ (resp.~$\mathbb Q$). 
The set of positive rational numbers (resp.~integers) 
is denoted by $\mathbb Q_{>0}$ (resp.~$\mathbb Z_{>0}$). 
\end{conventions}

In this paper, we do not use $\mathbb R$-divisors. 
We only use $\mathbb Q$-divisors. 

\section{Preliminaries}\label{A-sec2}
In this section, we quickly recall some basic definitions and notation 
for the reader's convenience. 
For the details, see \cite[Section 2]{fujino-slc-trivial}. 

\medskip 

Let us start with the definition of {\em{simple normal crossing pairs}}. 

\begin{defn}[Simple normal crossing pairs]\label{A-def2.1}
We say that the pair $(X, B)$ is {\em{simple normal crossing}} 
at a point $a\in X$ if $X$ has a Zariski open neighborhood 
$U$ of $a$ that can be embedded in a smooth 
variety $M$, where $M$ has a regular system of parameters 
$(x_1, \ldots, x_p, y_1, \ldots, y_r)$ at $a=0$ in 
which $U$ is defined by a monomial equation 
$$
x_1\cdots x_p=0
$$ 
and 
$$
B=\sum _{i=1}^r b_i (y_i=0)|_U, \quad 
b_i\in \mathbb Q.
$$ 
We say that $(X, B)$ is a {\em{simple normal crossing pair}} 
if it is simple normal crossing at every point of $X$. 
If $(X, 0)$ is a simple normal crossing pair, then 
$X$ is called a {\em{simple normal crossing variety}}. 
If $(X, B)$ is a simple normal crossing pair and 
$B$ is reduced, then $B$ is called a {\em{simple 
normal crossing divisor}} on $X$. 

Let $(X, B)$ be a simple normal crossing pair 
such that all the coefficients of $B$ are 
less than or equal to one. 
Let $\nu \colon X^\nu\to X$ be the normalization of $X$. 
We put $K_{X^\nu}+\Theta=\nu^*(K_X+B)$, that is, 
$\Theta$ is the sum of 
the inverse images of $B$ and the singular locus of $X$. 
By assumption, all the coefficients of $\Theta$ are less than or equal to one. 
Therefore, it is easy to see that $(X^\nu, \Theta)$ is sub log canonical. 
In this situation, we simply say that $W$ is 
a {\em{stratum}} of $(X, B)$ if $W$ is an irreducible component of $X$ or 
$W$ is the $\nu$-image of some log canonical center of $(X^\nu, \Theta)$. 
We note that a stratum of a simple normal crossing variety $X$ 
means a stratum of a simple normal crossing pair 
$(X, 0)$. 
\end{defn}

We write the precise definition of {\em{semi-log canonical pairs}}, 
{\em{slc centers}}, and {\em{slc strata}} for the reader's convenience. 
For the details of semi-log canonical pairs, we recommend the reader to 
see \cite{fujino-fund-slc}. 

\begin{defn}[Semi-log canonical pairs]\label{x-def2.2} 
Let $X$ be an equidimensional scheme which 
satisfies Serre's $S_2$ condition and 
is normal crossing in codimension one. Let $\Delta$ 
be an effective $\mathbb Q$-divisor on $X$ 
such that no irreducible component of $\Supp \Delta$ 
is contained in the singular locus of $X$ and that 
$K_X+\Delta$ is $\mathbb Q$-Cartier. 
We say that $(X, \Delta)$ is a {\em{semi-log canonical}} pair if 
$(X^\nu, \Delta_{X^\nu})$ is log canonical 
in the usual sense, where $\nu:X^\nu\to X$ 
is the normalization of $X$ and 
$K_{X^\nu}+\Delta_{X^\nu}=\nu^*(K_X+\Delta)$, 
that is, $\Delta_{X^\nu}$ is the sum of the inverse 
images of $\Delta$ 
and the conductor of $X$. An {\em{slc center}} of $(X, \Delta)$ 
is the $\nu$-image of an lc center of $(X^\nu, \Delta_{X^\nu})$. 
An {\em{slc stratum}} of $(X, \Delta)$ 
means either an slc center of $(X, \Delta)$ or an 
irreducible component of $X$. 
\end{defn}

We recall various definitions and operations of 
($\mathbb Q$-)divisors. 
We note that we are mainly interested in {\em{reducible}} varieties 
in this paper. 

\begin{say}[Divisors]\label{x-say2.3} 
Let $X$ be a scheme with structure sheaf $\mathcal O_X$ and let 
$\mathcal K_X$ be the sheaf of total quotient rings of $\mathcal O_X$. 
Let $\mathcal K^*_X$ denote the (multiplicative) 
sheaf of invertible elements in $\mathcal K_X$, 
and $\mathcal O^*_X$ the sheaf of invertible 
elements in $\mathcal O_X$. 
We note that $\mathcal O_X\subset \mathcal K_X$ 
and $\mathcal O^*_X\subset 
\mathcal K^*_X$ hold. 
A {\em{Cartier divisor}} $D$ on $X$ is a global section of 
$\mathcal K^*_X/\mathcal O^*_X$, that is, 
$D$ is an element of $\Gamma(X, \mathcal K^*_X/\mathcal O^*_X)$. 
A {\em{$\mathbb Q$-Cartier divisor}} is an element of 
$\Gamma (X, \mathcal K^*_X/\mathcal O^*_X)\otimes 
_{\mathbb Z}\mathbb Q$. Let $D_1$ and $D_2$ 
be two $\mathbb Q$-Cartier divisors 
on $X$. Then $D_1$ is {\em{linearly}} 
(resp.~{\em{$\mathbb Q$-linearly}}) 
{\em{equivalent}} to $D_2$, denoted by $D_1\sim D_2$ (resp.~$D_1
\sim _{\mathbb Q}D_2$), if 
$$
D_1=D_2+\sum _{i=1}^k r_i (f_i)
$$ 
such that $f_i\in \Gamma (X, \mathcal K^*_X)$ and $r_i\in \mathbb Z$ 
(resp.~$r_i\in \mathbb Q$) for every $i$. 
We note that $(f_i)$ is a {\em{principal Cartier divisor}} 
associated to $f_i$, that is, 
the image of $f_i$ by 
$$
\Gamma (X, \mathcal K^*_X)\to 
\Gamma(X, \mathcal K^*_X/\mathcal O^*_X). 
$$
Let $f \colon X\to Y$ be a morphism between schemes. 
If there exists a $\mathbb Q$-Cartier 
divisor $B$ on $Y$ such that 
$D_1\sim _{\mathbb Q} D_2+f^*B$, then 
$D_1$ is said to be {\em{relatively $\mathbb Q$-linearly 
equivalent to $D_2$}}. 
It is denoted by $D_1\sim _{\mathbb Q, f}D_2$ or 
$D_1\sim _{\mathbb Q, Y} D_2$. 

\medskip

From now on, let $X$ be an equidimensional scheme. We note 
that $X$ is not necessarily regular in codimension one. 
A ({\em{Weil}}) {\em{divisor}} $D$ on $X$ is a finite formal 
sum 
$$
D=\sum _i d_iD_i, 
$$
where $D_i$ is an irreducible reduced closed subscheme of $X$ 
of pure codimension one and $d_i$ is an integer 
for every $i$ such that $D_i\ne D_j$ for every $i\ne j$. 
If $d_i \in \mathbb Q$ for every $i$, 
then $D$ is called a {\em{$\mathbb Q$-divisor}}. 
Let $D=\sum _i d_i D_i$ be a $\mathbb Q$-divisor as above. 
We put 
\begin{equation*}
D^{\leq 1}=\sum _{d_i\leq 1}d_i D_i, \quad 
D^{<1} =\sum _{d_i<1}d_iD_i, \quad 
D^{= 1}=\sum _{d_i= 1} D_i, \quad \text{and} \quad
\lceil D\rceil =\sum _i \lceil d_i \rceil D_i, 
\end{equation*}
where $\lceil d_i\rceil$ is the integer defined by $d_i\leq 
\lceil d_i\rceil <d_i+1$. Let $D$ be a $\mathbb Q$-divisor. 
We also put 
$$
\lfloor D\rfloor=-\lceil -D\rceil. 
$$
We call $D$ a {\em{subboundary}} 
$\mathbb Q$-divisor if $D=D^{\leq 1}$ holds. 
When $D$ is effective and $D=D^{\leq 1}$ holds, 
we call $D$ a {\em{boundary}} $\mathbb Q$-divisor. 

We further assume that 
$f \colon X\to Y$ is a surjective morphism onto an irreducible 
variety $Y$. 
Then we put 
$$
D^v=\sum _{f(D_i)\subsetneq Y}d_i D_i \quad 
\text{and} \quad D^h=D-D^v, 
$$
and call $D^v$ the {\em{vertical part}} 
and $D^h$ the {\em{horizontal part}} of $D$ 
with respect to $f \colon X\to Y$, respectively. 

\medskip 

Finally, let $D$ be a $\mathbb Q$-Cartier divisor on a 
complete normal irreducible variety $X$. 
If $D\cdot C=0$ for any complete curve $C$ on $X$, then 
$D$ is said to be {\em{numerically trivial}}. When $D$ is 
numerically trivial, we simply write $D\equiv 0$. 
\end{say}

Let us recall the definition of {\em{potentially nef divisors}} 
introduced by the first author in \cite{fujino-slc-trivial}. 

\begin{defn}[{Potentially nef divisors, see 
\cite[Definition 2.5]{fujino-slc-trivial}}]\label{x-def2.4} 
Let $X$ be a normal 
irreducible variety and let $D$ be a divisor on $X$. 
If there exist a completion $X^\dag$ of $X$, 
that is, $X^\dag$ is a complete normal 
variety and contains $X$ as a dense Zariski open set, and 
a nef divisor $D^\dag$ on $X^\dag$ such that 
$D=D^\dag|_X$, then $D$ is called 
a {\em{potentially nef}} divisor on $X$. 
A finite $\mathbb Q_{>0}$-linear 
combination of potentially nef divisors is called 
a {\em{potentially nef}} $\mathbb Q$-divisor. 
\end{defn}

Although it is dispensable, 
the following definition is very useful when we state our results (see 
Theorems \ref{B-thm1.1} and \ref{B-thm1.3}). 
We note that the {\em{$\mathbb Q$-Cartier 
closure}} of a $\mathbb Q$-Cartier $\mathbb Q$-divisor 
$D$ on a normal variety $X$ is the $\mathbb Q$-b-divisor 
$\overline D$ with trace 
$$
\overline D _Y=f^*D, 
$$
where $f \colon Y\to X$ is a proper birational morphism 
from a normal variety $Y$. 

\begin{defn}[{see \cite[Definition 2.12]{fujino-slc-trivial}}]\label{x-def2.5}
Let $X$ be a normal irreducible variety. 
A $\mathbb Q$-b-divisor $\mathbf D$ of $X$ 
is {\em{b-potentially nef}} 
(resp.~{\em{b-semi-ample}}) if there 
exists a proper birational morphism $X'\to X$ from a normal 
variety $X'$ such that $\mathbf D=\overline {\mathbf D_{X'}}$, that 
is, $\mathbf D$ is the $\mathbb Q$-Cartier closure of $\mathbf D_{X'}$, and that 
$\mathbf D_{X'}$ is potentially nef 
(resp.~semi-ample). 
A $\mathbb Q$-b-divisor $\mathbf D$ of $X$ is {\em{$\mathbb Q$-b-Cartier}} 
if there is a proper birational morphism $X'\to X$ from a normal 
variety $X'$ such that $\mathbf D=\overline{\mathbf D_{X'}}$. 

Let $X$ be a complete normal irreducible variety. 
A $\mathbb Q$-b-divisor $\mathbf D$ of $X$ is 
{\em{b-numerically trivial}} (resp.~{\em{$\mathbb Q$-b-linearly trivial}}) 
if there exists a proper birational morphism 
$X'\to X$ from a complete normal variety $X'$ such that 
$\mathbf D=\overline{\mathbf D_{X'}}$ with $\mathbf D_{X'}\equiv 0$ 
(resp.~$\mathbf D_{X'}\sim _{\mathbb Q}0$). 
\end{defn}

For the details of (b-)potentially nef divisors, 
we recommend the reader to see \cite[Section 2]
{fujino-slc-trivial}. 

\section{Quick review of basic slc-trivial fibrations}\label{C-sec3}

In this section, we quickly recall some definitions 
of {\em{basic slc-trivial fibrations}} in \cite[Section 4]{fujino-slc-trivial}. 
We recommend the reader to see \cite[1.15]{fujino-slc-trivial} 
for some historical comments. 

\medskip 

We introduce the notion of basic slc-trivial fibrations. 

\begin{defn}[{Basic slc-trivial fibrations, 
see \cite[Definition 4.1]{fujino-slc-trivial}}]\label{C-def3.1}
A {\em{pre-basic slc-trivial fibration}} $f \colon (X, B)\to Y$ consists of 
a projective surjective morphism 
$f \colon X\to Y$ and a simple normal crossing pair $(X, B)$ satisfying 
the following properties: 
\begin{itemize}
\item[(1)] $Y$ is a normal irreducible variety,   
\item[(2)] every stratum of $X$ is dominant onto $Y$ and 
$f_*\mathcal O_X\simeq \mathcal O_Y$, 
\item[(3)] $B$ is a $\mathbb Q$-divisor such that $B=B^{\leq 1}$ holds 
over 
the generic point of $Y$, and 
\item[(4)] there exists 
a $\mathbb Q$-Cartier $\mathbb Q$-divisor $D$ on $Y$ such that 
$$
K_X+B\sim _{\mathbb Q}f^*D. 
$$ 
\end{itemize}
If a pre-basic slc-trivial fibration $f \colon (X, B)\to Y$ also satisfies 
\begin{itemize}
\item[(5)] $\rank f_*\mathcal O_X(\lceil -B^{<1}\rceil)=1$, 
\end{itemize}
then it is called a {\em{basic slc-trivial fibration}}. 
\end{defn}

Roughly speaking, if $X$ is irreducible 
and $(X, B)$ is sub kawamata log terminal 
(resp.~sub log canonical) over the generic point of $Y$, 
then it is a klt-trivial fibration (resp.~an lc-trivial fibration). 

\medskip 

In order to define discriminant $\mathbb Q$-b-divisors and 
moduli $\mathbb Q$-b-divisors for basic slc-trivial fibrations, 
we need the notion of induced (pre-)basic slc-trivial fibrations. 

\begin{say}[{Induced (pre-)basic slc-tirival 
fibrations, see \cite[4.3]{fujino-slc-trivial}}]\label{C-say3.2}
Let $f \colon (X, B)\to Y$ be a \linebreak 
(pre-)basic slc-trivial fibration 
and let $\sigma \colon Y'\to Y$ be a generically finite 
surjective morphism from a normal irreducible variety $Y'$. 
Then we have an {\em{induced {\em{(}}pre-{\em{)}}basic slc-trivial fibration}} 
$f' \colon (X', B_{X'})\to Y'$, where 
$B_{X'}$ is defined by $\mu^*(K_X+B)=K_{X'}+B_{X'}$, with 
the following commutative diagram: 
$$
\xymatrix{
   (X', B_{X'}) \ar[r]^{\mu} \ar[d]_{f'} & (X, B)\ar[d]^{f} \\
   Y' \ar[r]_{\sigma} & Y, 
} 
$$
where $X'$ coincides with 
$X\times _{Y}Y'$ over a nonempty Zariski open set of $Y'$. 
More precisely, $X'$ is a simple normal crossing variety with a morphism 
$X'\to X\times _Y Y'$ that is an isomorphism over 
a nonempty Zariski open set of $Y'$ such that 
$X'$ is projective over $Y'$ and that every stratum of $X'$ is dominant onto 
$Y'$. 
\end{say}

Now we are ready to define {\em{discriminant 
$\mathbb Q$-b-divisors}} and 
{\em{moduli $\mathbb Q$-b-divisors}} for basic slc-trivial fibrations. 

\begin{say}[{Discriminant and 
moduli $\mathbb Q$-b-divisors, 
see \cite[4.5]{fujino-slc-trivial}}]\label{C-say3.3} 
Let $f \colon (X, B)\to Y$ be a \linebreak 
(pre-)basic 
slc-trivial fibration as in Definition \ref{C-def3.1}. 
Let $P$ be a prime divisor on $Y$. 
By shrinking $Y$ around the generic point of $P$, 
we assume that $P$ is Cartier. We set 
$$
b_P=\max \left\{t \in \mathbb Q\, \left|\, 
\begin{array}{l}  {\text{$(X^\nu, \Theta+t\nu^*f^*P)$ is sub log canonical}}\\
{\text{over the generic point of $P$}} 
\end{array}\right. \right\},  
$$ 
where $\nu \colon X^\nu\to X$ is the normalization and 
$K_{X^\nu}+\Theta=\nu^*(K_X+B)$, that is, 
$\Theta$ is the sum of the inverse images of $B$ and the singular 
locus of $X$, and 
set $$
B_Y=\sum _P (1-b_P)P, 
$$ 
where $P$ runs over prime divisors on $Y$. 
Then it is easy to  see that 
$B_Y$ is a well-defined $\mathbb Q$-divisor on 
$Y$ and is called the {\em{discriminant 
$\mathbb Q$-divisor}} of $f \colon (X, B)\to Y$. We set 
$$
M_Y=D-K_Y-B_Y
$$ 
and call $M_Y$ the {\em{moduli $\mathbb Q$-divisor}} of $f \colon 
(X, B)\to Y$. 
By definition, we have 
$$
K_X+B\sim _{\mathbb Q}f^*(K_Y+B_Y+M_Y). 
$$

Let $\sigma\colon Y'\to Y$ be a proper birational morphism 
from a normal variety $Y'$ and let $f' \colon (X', B_{X'})\to Y'$ be 
an induced (pre-)basic slc-trivial fibration 
by $\sigma \colon Y'\to Y$.  
We can define $B_{Y'}$, $K_{Y'}$ and $M_{Y'}$ such that 
$\sigma^*D=K_{Y'}+B_{Y'}+M_{Y'}$, 
$\sigma_*B_{Y'}=B_Y$, $\sigma _*K_{Y'}=K_Y$ 
and $\sigma_*M_{Y'}=M_Y$. We note that 
$B_{Y'}$ is independent of the choice of $(X', B_{X'})$, 
that is, $B_{Y'}$ is well defined. Hence 
there exist a unique $\mathbb Q$-b-divisor $\mathbf B$ 
such that 
$\mathbf B_{Y'}=B_{Y'}$ for every $\sigma \colon Y'\to Y$ and a unique 
$\mathbb Q$-b-divisor $\mathbf M$ such that $\mathbf M_{Y'}=M_{Y'}$ for 
every $\sigma \colon Y'\to Y$. 
Note that $\mathbf B$ is called the {\em{discriminant $\mathbb Q$-b-divisor}} and 
that $\mathbf M$ is called 
the {\em{moduli $\mathbb Q$-b-divisor}} associated to $f \colon (X, B)\to Y$. 
We sometimes simply say that $\mathbf M$ is 
the {\em{moduli part}} of $f \colon (X, B)\to Y$. 
\end{say}

For the full details of this section, we recommend the reader to see 
\cite[Section 4]{fujino-slc-trivial}. 

\section{On variation of mixed Hodge structure}\label{d-sec4}

This section heavily depends on \cite[Sections 4 and 7]{fujino-fujisawa}. 
We strongly recommend the reader to take a quick look at \cite[Section 4]{fujino-fujisawa} 
before reading this section. 

\medskip 

Let us quickly recall \cite[Theorem 7.1]{fujino-fujisawa}, which is 
one of the main ingredients of 
\cite{fujino-slc-trivial} (see \cite[Section 3]{fujino-slc-trivial}). 

\begin{thm}[{\cite[Theorem 7.1]{fujino-fujisawa}}]\label{d-thm4.1}
Let $(V, T)$ be a simple normal crossing pair such that $T$ is 
reduced and let 
$h \colon V\to Y$ be a projective surjective morphism onto a smooth 
variety $Y$. 
Assume that every stratum of $(V, T)$ is dominant onto $Y$. 
Let $\Sigma$ be a simple normal crossing divisor on $Y$ such that 
every stratum of $(V, T)$ is smooth over $Y^*=Y\setminus \Sigma$. 
We put $V^*=h^{-1}(Y^*)$, $T^*=T|_{V^*}$, and $d=\dim V-\dim Y$. 
Let $\iota \colon  V^*\setminus T^*\hookrightarrow V^*$ be the natural 
open immersion. 
Then the local system $R^k(h|_{V^*})_*\iota_!\mathbb Q_{V^*\setminus T^*}$ 
underlies a graded polarizable admissible variation of 
$\mathbb Q$-mixed Hodge structure on $Y^*$ for every $k$. 
We put $\mathcal V^k_{Y^*}=R^k(h|_{V^*})_*\iota_!\mathbb Q_{V^*\setminus T^*}
\otimes \mathcal O_{Y^*}$ for 
every $k$. 
Let 
$$
\cdots \subset F^{p+1}(\mathcal V^k_{Y^*})\subset F^p(\mathcal V^k_{Y^*})
\subset F^{p-1}(\mathcal V^k_{Y^*})\subset \cdots 
$$
be the Hodge filtration. 
We assume that all the local monodromies on the local system 
$R^k(h|_{V^*})_*\iota_!\mathbb Q_{V^*\setminus T^*}$ around $\Sigma$ 
are unipotent for every $k$. 
Then $R^kh_*\mathcal O_V(-T)$ is isomorphic to the canonical extension 
of 
$$
\Gr ^0_F(\mathcal V^k_{Y^*})=F^0(\mathcal V^k_{Y^*})/F^1(\mathcal V^k_{Y^*}), 
$$ 
which is denoted by $\Gr^0_F(\mathcal V^k_Y)$, for every $k$. 
By taking the dual, we have $$R^{d-k}h_*\omega_{V/Y}(T)\simeq 
\left(\Gr^0_F(\mathcal V^k_Y)\right)^*$$ for every $k$. 
\end{thm}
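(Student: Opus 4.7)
The plan is to establish the theorem in three stages: construct the variation of mixed Hodge structure on $Y^*$ together with its Hodge filtration, identify $R^k h_* \mathcal{O}_V(-T)$ with the canonical extension of $\Gr^0_F(\mathcal V^k_{Y^*})$, and derive the dual statement by relative Grothendieck--Serre duality.

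First, I would construct the VMHS on $Y^*$. Since every stratum of $(V,T)$ is smooth over $Y^*$, the map $h|_{V^*}$ restricts on each fiber to a simple normal crossing pair, and Deligne's mixed Hodge theory for pairs equips $H^k(V_y, T_y; \mathbb Q) \simeq H^k(V_y, \iota_!\mathbb Q_{V_y \setminus T_y})$ with a graded polarizable mixed Hodge structure. Using a semisimplicial resolution by strata together with the relative logarithmic de Rham complex $\Omega^\bullet_{V^*/Y^*}(\log T^*)(-T^*)$, the Hodge filtration spreads out holomorphically and is flat for the Gauss--Manin connection; admissibility can be verified along one-parameter families via compactification. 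The stupid filtration $F^p = \sigma^{\geq p}$ on this log de Rham complex realizes the Hodge filtration, and passage to $\Gr^0$ already gives the identification $\Gr^0_F \mathcal V^k_{Y^*} \simeq R^k(h|_{V^*})_* \mathcal O_{V^*}(-T^*)$ over $Y^*$.

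The central step is extending this identification across $\Sigma$. Here I would work on all of $V$ with the complex $\Omega^\bullet_{V/Y}(\log(T + h^{-1}\Sigma))(-T)$ and analyze $Rh_*$ of it together with its stupid filtration. The desired output, which is the core content of \cite{fujino-fujisawa}, is that under the unipotent monodromy assumption the Hodge-to-de Rham spectral sequence degenerates at $E_1$, each $R^j h_* \Omega^i_{V/Y}(\log(T + h^{-1}\Sigma))(-T)$ is locally free on $Y$, and the resulting filtered bundle coincides with the Deligne canonical extension of $(\mathcal V^k_{Y^*}, F^\bullet)$. Passing to $\Gr^0_F$ then produces the desired isomorphism $R^k h_* \mathcal O_V(-T) \simeq \Gr^0_F(\mathcal V^k_Y)$. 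The dual statement follows from relative Grothendieck--Serre duality applied to the projective morphism $h$: since $\mathcal O_V(-T)$ and $\omega_{V/Y}(T)$ are Serre-dual with respect to $\omega_{V/Y}$ and the higher direct images are locally free, one obtains $R^{d-k} h_* \omega_{V/Y}(T) \simeq (R^k h_* \mathcal O_V(-T))^*$, which combines with the previous isomorphism to give the stated dual.

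The main obstacle is the canonical extension step: one must simultaneously verify local freeness of the higher direct images of the logarithmic de Rham terms, $E_1$-degeneration, and compatibility of the resulting filtered lattice with Deligne's prescription by unipotent monodromy, all for a simple normal crossing pair with reduced boundary over a base carrying its own SNC discriminant $\Sigma$. This requires a careful analysis in the spirit of Steenbrink--Zucker, extended to the reducible setting, and it is the technical heart of the approach; by contrast, the initial construction of the VMHS and the final duality argument are comparatively formal once degeneration and freeness are in hand.
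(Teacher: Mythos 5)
First, note that the paper itself does not prove Theorem \ref{d-thm4.1}: the statement is quoted verbatim from \cite[Theorem 7.1]{fujino-fujisawa}, and the reader is referred to \cite[Sections 4 and 7]{fujino-fujisawa} for the proof. So there is no in-paper proof to match; what the present paper does recall (in the proof of Lemma \ref{x-lem4.6}) is the construction underlying the cited proof, and that is exactly where your sketch diverges in an essential way.

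Your overall architecture --- construct the VMHS on $Y^*$, identify $R^kh_*\mathcal O_V(-T)$ with the canonical extension of $\Gr^0_F(\mathcal V^k_{Y^*})$, then dualize --- matches the cited proof. The gap is in your central step. You propose to work with a single relative logarithmic de Rham complex $\Omega^\bullet_{V/Y}(\log(T+h^{-1}\Sigma))(-T)$ on $V$ itself. But $V$ is a possibly reducible simple normal crossing variety, not a smooth one: K\"ahler differentials on such a $V$ do not compute the relevant cohomology, Steenbrink's local freeness theorem and the $E_1$-degeneration arguments you invoke are proved for smooth total spaces over $Y$, and the comparison with Deligne's canonical extension is likewise a statement about (admissible variations of) Hodge structure arising from smooth families. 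The whole point of the construction in \cite{fujino-fujisawa}, recalled in Section \ref{d-sec4} of this paper, is to avoid this: one takes semi-simplicial resolutions $\varepsilon_V\colon V_\bullet\to V$ and $\varepsilon_T\colon T_\bullet\to T$ whose pieces are disjoint unions of the \emph{smooth} strata of $(V,T)$, forms the filtered cone complex $C(\phi^*)$ of $\phi\colon T_\bullet\to V_\bullet$, whose weight-graded pieces are $R^q(v_{p+1})_*\Omega_{V_{p+1}/Y}(\log E_{p+1})\oplus R^q(u_p)_*\Omega_{T_p/Y}(\log F_p)$ with log poles only along the preimages of $\Sigma$ (the divisor $T$ enters through the cone, not through log poles and a twist), and then applies Steenbrink's theorem stratum by stratum together with the lemma on two filtrations to propagate $E_1$-degeneration, local freeness, and the identification of $\Gr^0_F$ to the total complex. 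Without this reduction to the smooth strata, your middle step has no well-defined object to run on. Your final duality step is acceptable as an alternative: $V$ is Gorenstein, so relative Grothendieck--Serre duality gives $R^{d-k}h_*\omega_{V/Y}(T)\simeq\left(R^kh_*\mathcal O_V(-T)\right)^*$ once local freeness is known, whereas the cited source takes duals at the level of the variation of mixed Hodge structure as in Remark \ref{x-rem4.7}.
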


For the details of Theorem \ref{d-thm4.1}, we recommend the reader to 
see \cite[Sections 4 and 7]{fujino-fujisawa} (see also \cite{ffs}). 
We note that the reader can find basic definitions of variations of 
mixed Hodge structure in \cite[Section 3]{fujino-fujisawa}.  

\medskip 

Let us introduce the notion of {\em{birational maps}} 
of simple normal crossing 
pairs. 

\begin{defn}[Birational maps of simple normal crossing 
pairs]\label{d-def4.2}
Let $(V_1, T_1)$ and $(V_2, T_2)$ be 
simple normal crossing pairs such that 
$T_1$ and $T_2$ are reduced. 
Let $\alpha \colon V_1\dashrightarrow V_2$ be a proper birational map. 
Assume that 
there exist Zariski open sets $U_1$ 
and $U_2$ of $V_1$ and $V_2$ respectively such that 
$U_1$ contains 
the generic point of 
any stratum of $(V_1, T_1)$, $U_2$ contains 
the generic point of any stratum of $(V_2, T_2)$, 
and $\alpha$ induces an isomorphism between 
$(U_1, T_1|_{U_1})$ and $(U_2, T_2|_{U_2})$. 
Then we call $\alpha$ a {\em{birational map between 
$(V_1, T_1)$ and $(V_2, T_2)$}}. 
\end{defn}

As an easy application of \cite[Lemma 6.2]{fujino-fujisawa} and 
\cite[Theorem 1.4]{bierstone}, we can prove the 
following useful lemma. 

\begin{lem}\label{d-lem4.3}
Let $(V_1, T_1)$ and $(V_2, T_2)$ be simple normal crossing 
pairs 
such that $T_1$ and $T_2$ are reduced. 
Let $\alpha \colon V_1\dashrightarrow V_2$ be a birational map 
between $(V_1, T_1)$ and $(V_2, T_2)$. 
Then there exists a commutative diagram 
\begin{equation}\label{d-eq4.1}
\xymatrix{
& (V', T') \ar[dl]_-{p_1}\ar[dr]^-{p_2}&  \\
(V_1, T_1) \ar@{-->}[rr]_-\alpha&& (V_2, T_2),  
}
\end{equation}
where $(V', T')$ is a simple normal crossing pair 
such that $T'$ is reduced, and $p_i$ is a proper 
birational morphism between $(V', T')$ and 
$(V_i, T_i)$ for $i=1, 2$. 
In this situation, $p_i$ induces a natural one-to-one correspondence 
between the set of strata of $(V', T')$ and that of $(V_i, T_i)$ for 
$i=1, 2$. 
Let $S$ be any stratum of $(V', T')$. 
Then we have $$Rp_{i*}\mathcal O_S\simeq 
\mathcal O_{p_i(S)}$$ for 
$i=1, 2$. 
Moreover, we have $$Rp_{i*}\mathcal O_{V'}(-T')\simeq 
\mathcal O_{V_i}(-T_i)$$ for 
$i=1, 2$. 
\end{lem}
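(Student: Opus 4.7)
The plan is to construct $V'$ by resolving the closure of the graph of $\alpha$, and then to deduce the cohomological statements from standard vanishing on smooth varieties together with \cite[Lemma 6.2]{fujino-fujisawa}.

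First I would take $\Gamma \subset V_1 \times V_2$ to be the closure of the graph of the isomorphism $\alpha|_{U_1}\colon U_1 \to \alpha(U_1) \subset U_2$ furnished by Definition \ref{d-def4.2}. The two projections $q_i\colon \Gamma \to V_i$ are proper and birational, and they restrict to isomorphisms over $U_i$. Applying the canonical desingularization \cite[Theorem 1.4]{bierstone} to $\Gamma$ together with the closed subscheme $q_1^{-1}(T_1 \cup V_1^{\mathrm{sing}}) \cup q_2^{-1}(T_2 \cup V_2^{\mathrm{sing}})$ as boundary produces a proper birational morphism $V' \to \Gamma$ that is an isomorphism over the locus where $\Gamma$ together with this boundary is already simple normal crossing; by the hypothesis on $\alpha$ this locus contains the preimage of $U_1 \simeq U_2$. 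Setting $T' := p_1^{-1}(T_1)_{\mathrm{red}}$---which by construction coincides with $p_2^{-1}(T_2)_{\mathrm{red}}$ and is the union of the strict transforms of the $T_i$ with the exceptional divisor---the pair $(V', T')$ is simple normal crossing with $T'$ reduced, and each $p_i\colon V' \to V_i$ is an isomorphism over $U_i$.

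Next I would verify the bijection of strata. Since $p_i$ is an isomorphism over $U_i$ and $U_i$ contains the generic point of every stratum of $(V_i, T_i)$, the irreducible components of $V'$ biject with those of $V_i$, and the irreducible components of $T'$ biject with those of $T_i$. Taking connected components of the various intersections of such components (that is, applying the definition of stratum via the normalization) then yields the claimed one-to-one correspondence. Moreover, for every stratum $S$ of $(V', T')$ the restriction $p_i|_S\colon S \to p_i(S)$ is a proper birational morphism between smooth irreducible varieties.

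With this in hand, $Rp_{i*}\mathcal{O}_S \simeq \mathcal{O}_{p_i(S)}$ is immediate because smooth varieties have rational singularities, applied to $p_i|_S$. The hard part will be the remaining identity $Rp_{i*}\mathcal{O}_{V'}(-T') \simeq \mathcal{O}_{V_i}(-T_i)$, since $V'$ and $V_i$ are reducible and $T'$, $T_i$ are themselves reducible unions of strata, so this is a genuinely global SNC statement rather than a single-stratum calculation. I plan to reduce it to \cite[Lemma 6.2]{fujino-fujisawa}, which is precisely designed to promote the stratum-wise identification from the previous paragraph to the level of $\mathcal{O}(-T)$ on the full SNC pair by a dévissage along the stratification of $T'$, using the short exact sequence $0 \to \mathcal{O}_{V'}(-T') \to \mathcal{O}_{V'} \to \mathcal{O}_{T'} \to 0$ and its analogue on $V_i$ together with the stratum-wise computation above.
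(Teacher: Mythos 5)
Your proposal is correct and follows essentially the same route as the paper: the diagram is produced by \cite[Theorem 1.4]{bierstone}, the identity $Rp_{i*}\mathcal O_{V'}(-T')\simeq \mathcal O_{V_i}(-T_i)$ is exactly \cite[Lemma 6.2]{fujino-fujisawa}, and $Rp_{i*}\mathcal O_S\simeq \mathcal O_{p_i(S)}$ follows because $p_i|_S$ is a proper birational morphism of smooth irreducible varieties. The only difference is presentational: you spell out the graph-closure construction and a d\'evissage that the paper delegates wholesale to the two cited results.
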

\begin{proof}
By \cite[Theorem 1.4]{bierstone}, we can take 
a desired commutative diagram \eqref{d-eq4.1}, where 
$p_i$ is a proper birational morphism 
between $(V', T')$ and $(V_i, T_i)$ such that 
$p_i$ is an isomorphism over $U_i$ for $i=1, 2$. 
By \cite[Lemma 6.2]{fujino-fujisawa}, 
we have $Rp_{i*}\mathcal O_{V'}(-T')\simeq 
\mathcal O_{V_i}(-T_i)$ for $i=1, 2$. 
Let $S$ be a stratum of $(V', T')$. 
Then $p_i(S)$ is a stratum of $(V_i, T_i)$ since 
$p_i$ is a birational 
morphism between $(V', T')$ and $(V_i, T_i)$ for $i=1, 2$. 
Therefore, $p_i(S)$ is a smooth 
irreducible variety and $p_i \colon S\to p_i(S)$ 
is obviously birational for $i=1, 2$. 
This implies that $Rp_{i*}\mathcal O_S\simeq 
\mathcal O_{p_i(S)}$ for $i=1, 2$. 
Since $p_i \colon V'\to V_i$ is a proper 
birational morphism between $(V', T')$ and 
$(V_i, T_i)$, it is easy to see that there 
exists a natural one-to-one correspondence between 
the set of strata of $(V', T')$ and that of 
$(V_i, T_i)$ for $i=1, 2$. 
\end{proof}

\begin{rem}\label{x-rem4.4}
In Lemma \ref{d-lem4.3}, we assume that 
$\alpha\colon (V_1, T_1)\dashrightarrow (V_2, T_2)$ is 
projective over a fixed scheme $Y$, that is, 
there exists the following commutative diagram
$$
\xymatrix{
(V_1, T_1) \ar[dr]_-{h_1}\ar@{-->}[rr]^-\alpha&& (V_2, T_2)
\ar[dl]^-{h_2}\\
&Y& 
}
$$ 
such that $h_1$ and $h_2$ are projective. 
Then we see that we can make $V'$ projective over $Y$ by the 
proof of Lemma \ref{d-lem4.3}. 
\end{rem}

We define a somewhat artificial 
condition for birational maps of simple normal crossing pairs. 
We will use it in Lemma \ref{x-lem4.6} below. 
For the basic definitions of semi-simplicial 
varieties, see, for example, \cite[Section 5.1]{peters-steenbrink}. 

\begin{defn}\label{x-def4.5}
Let $(V, T)$ be a simple normal crossing 
pair such that 
$T$ is reduced. 
Let $\alpha \colon V\dashrightarrow V$ be a birational map 
between $(V, T)$ and $(V, T)$ 
in the sense of Definition \ref{d-def4.2}. 
We say that $\alpha$ satisfies condition $(\bigstar)$ 
if there exists a commutative diagram 
\begin{equation}\label{d-eq4.2}
\xymatrix{
& (V', T') \ar[dl]_-{p_1}\ar[dr]^-{p_2}&  \\
(V, T) \ar@{-->}[rr]_-\alpha&& (V, T)
}
\end{equation}
with the following properties: 
\begin{itemize}
\item[(1)] $(V', T')$ is a simple normal crossing pair such that $T'$ is reduced.  
\item[(2)] $p_i$ is a proper 
birational morphism between $(V', T')$ and $(V, T)$ in the sense of 
Definition \ref{d-def4.2} for $i=1, 2$. 
\item[(3)] There are semi-simplicial resolutions 
$\varepsilon _T \colon  T_\bullet \to T$ 
and $\varepsilon _V \colon  V_\bullet \to V$, that is, 
$T_\bullet$ and $V_\bullet$ are semi-simplicial varieties, $\varepsilon_T$ and 
$\varepsilon _V$ are argumentations and of cohomological descent, 
such that $V_p$ and $T_q$ are disjoint unions of some strata of $(V, T)$ for all 
$p$ and $q$ and 
that they fit in the following commutative diagram 
\begin{equation}\label{d-eq4.3}
\xymatrix{T_\bullet\ar[d]_-{\varepsilon_T}\ar[r]^-\phi& V_\bullet
\ar[d]^-{\varepsilon _V}\\ 
T \ar[r]_j& V, 
}
\end{equation} 
where $\phi$ is a morphism of semi-simplicial varieties 
and $j$ is the natural closed embedding. 
Moreover, $\varepsilon_T \colon  S\to \varepsilon _T(S)$ 
(resp.~$\varepsilon_V \colon S\to \varepsilon _V(S)$) is 
a natural isomorphism for any irreducible component $S$ of $T_\bullet$ 
(resp.~$V_\bullet$). We note that 
$S$ is a stratum of $(V, T)$. 
\item[(4)] There are semi-simplicial varieties $\varepsilon_{T'} \colon 
T'_\bullet 
\to T'$ and $\varepsilon _{V'} \colon V'_\bullet \to V'$ such that 
$\varepsilon _{T'}$ and $\varepsilon _{V'}$ are argumentations, 
$V'_p$ and $T'_q$ are disjoint unions of 
some strata of $(V', T')$ for all $p$ and $q$ 
and that they fit in the following commutative diagram 
\begin{equation}\label{d-eq4.4}
\xymatrix{T'_\bullet\ar[d]_-{\varepsilon_{T'}}\ar[r]^-{\phi'}& V'_\bullet
\ar[d]^-{\varepsilon _{V'}}\\ 
T' \ar[r]_{j'}& V', 
}
\end{equation} 
where $\phi'$ is a morphism of semi-simplicial varieties and 
$j'$ is the natural closed embedding. 
As in (3), $\varepsilon_{T'} \colon S'\to \varepsilon _{T'}(S')$ 
(resp.~$\varepsilon_{V'} \colon S'\to \varepsilon _{V'}(S')$) is 
a natural isomorphism for any irreducible component $S'$ of $T'_\bullet$ 
(resp.~$V'_\bullet$). We note that $S'$ is a stratum of $(V', T')$. 
\item[(5)] The following commutative diagram 
\begin{equation}\label{d-eq4.5}
\xymatrix{&& T' \ar[dll]_-{p_1|_{T'}}\ar[dd]^(.60){j'}\ar[drr]^-{p_2|_{T'}}&& \\ 
T\ar[dd]_-j \ar@{-->}[rrrr]|-\hole ^(.40){\alpha|_T}&&&& T\ar[dd]^-j \\
&& V'\ar[dll]_-{p_1}\ar[drr]^-{p_2} && \\ 
V \ar@{-->}[rrrr]_-\alpha&&&& V
}
\end{equation}
can be lifted to a commutative diagram 
\begin{equation}\label{d-eq4.6}
\xymatrix{&& T'_\bullet \ar[dll]_-{p_1|_{T'_\bullet}}\ar[dd]^(.60){\phi'}
\ar[drr]^-{p_2|_{T'_\bullet}}&& \\ 
T_\bullet\ar[dd]_-\phi \ar@{-->}[rrrr]|-\hole ^(.40){\alpha|_{T_\bullet}}&&&& T_\bullet\ar[dd]^-\phi \\
&& V'_\bullet\ar[dll]_-{p_1|_{V'_\bullet}}\ar[drr]^-{p_2|_{V'_\bullet}} && \\ 
V _\bullet\ar@{-->}[rrrr]_-{\alpha_\bullet}&&&& V_\bullet
}
\end{equation} 
over \eqref{d-eq4.5} 
by \eqref{d-eq4.3} and \eqref{d-eq4.4} 
such that $p_1|_{V'_p}, p_2|_{V'_p}, \alpha_p, p_1|_{T'_q}, 
p_2|_{T'_q}$, and $\alpha|_{T'_q}$ are birational maps 
of smooth varieties for all $p$ and $q$. 
\item[(6)] If $\alpha\colon (V, T)\dashrightarrow (V, T)$ is 
projective over a fixed scheme $Y$, that is, 
there exists the following commutative diagram 
$$
\xymatrix{
(V, T) \ar[dr]_-h\ar@{-->}[rr]^-\alpha&& (V, T)
\ar[dl]^-h\\
&Y& 
}
$$ such that $h$ is projective, 
then 
$V'$ is also projective over $Y$. 
\end{itemize}
\end{defn}

The main purpose of this section is to establish the 
following result, which will play a crucial role in the proof of 
Theorem \ref{B-thm1.3} in Section \ref{C-sec5}. 

\begin{lem}\label{x-lem4.6}
We use the same notation and assumption as in Theorem \ref{d-thm4.1}. 
We assume that $Y$ is a curve. 
We further assume that $(V, T+\Supp h^*\Sigma)$ is a simple 
normal crossing pair and that all the local monodromies 
on the local system $R^jh_*\mathbb Q_{S^*}$ around 
$\Sigma$ are unipotent for any stratum $S$ of $(V, T)$ and 
all $j$, where $S^*=S|_{V^*}$. 
Let $\alpha \colon V\dashrightarrow V$ be a birational map between $(V, T)$ 
and $(V, T)$ over $Y$.  
We assume that 
$\alpha$ satisfies condition $(\bigstar)$ in Definition \ref{x-def4.5}. 
Then $\alpha$ induces isomorphisms  
$$
\alpha^* \colon W_m\Gr^0_F(\mathcal V^k_Y)\overset{\sim}{\longrightarrow}
W_m\Gr^0_F(\mathcal V^k_Y)
$$ 
for all $m$ and $k$, where $W$ denotes the canonical extension of 
the weight filtration. 

Let $G$ be a finite group which acts on $(V, T)$ birationally over $Y$ 
such that every element $\alpha\in G$ satisfies condition $(\bigstar)$ 
in Definition \ref{x-def4.5}. 
Then $G$ acts on $W_m\Gr^0_F(\mathcal V^k_Y)$ for all $m$ and $k$. 
\end{lem}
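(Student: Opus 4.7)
The plan is to resolve $\alpha$ by the roof supplied by condition $(\bigstar)$ and to transport the weight filtration through the accompanying semi-simplicial resolutions, thereby reducing the problem to functoriality of admissible variations of mixed Hodge structure under proper birational morphisms of simple normal crossing pairs.

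First, I would recall how $W_m\Gr^0_F(\mathcal{V}^k_Y)$ is defined. Using the semi-simplicial resolutions $\varepsilon_V\colon V_\bullet\to V$ and $\varepsilon_T\colon T_\bullet\to T$ furnished by condition $(\bigstar)$, the complex $\iota_!\mathbb{Q}_{V^*\setminus T^*}$ is quasi-isomorphic (up to a shift) to the cone of $\mathbb{Q}_{V^*_\bullet}\to\mathbb{Q}_{T^*_\bullet}$. The associated mixed Hodge complex, together with the stratum-wise unipotent monodromy hypothesis, equips the local system of Theorem \ref{d-thm4.1} with an admissible VMHS on $Y^*$ whose Deligne canonical extension across $\Sigma$ inherits a weight filtration $W$ coming from the simplicial degree. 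Since $\Gr^0_F$ commutes with $W$, and since Theorem \ref{d-thm4.1} identifies $\Gr^0_F(\mathcal{V}^k_Y)$ with $R^kh_*\mathcal{O}_V(-T)$, each subsheaf $W_m\Gr^0_F(\mathcal{V}^k_Y)$ is well defined.

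Next I would exploit diagram \eqref{d-eq4.6}. On each simplicial level, the morphisms $p_i|_{V'_p}\colon V'_p\to V_p$ and $p_i|_{T'_q}\colon T'_q\to T_q$ are disjoint unions of proper birational morphisms of smooth varieties over $Y$; such morphisms induce isomorphisms on relative $\mathbb{Q}$-cohomology that are strictly compatible with the Hodge and weight filtrations and, under the stratum-wise unipotent monodromy assumption, with Deligne's canonical extensions. Assembling these isomorphisms over the simplicial index, each $p_i$ induces an isomorphism of admissible VMHS between the datum attached to $(V,T)\to Y$ and the one attached to $(V',T')\to Y$, strictly compatible with $W$ and $F^\bullet$ and with the canonical extensions across $\Sigma$. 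Composing, $\alpha^{*}:=(p_1^{*})^{-1}\circ p_2^{*}$ is an automorphism of the canonical extension of $\mathcal{V}^k_{Y^*}$ respecting both filtrations; taking $\Gr^0_F$ yields the required action on $W_m\Gr^0_F(\mathcal{V}^k_Y)$. Independence of the chosen roof follows by dominating any two roofs for $\alpha$ by a third via \cite[Theorem 1.4]{bierstone}, on which the two candidate maps agree.

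For the finite group $G$, I would apply the construction to every element and verify multiplicativity $(\alpha\beta)^{*}=\beta^{*}\alpha^{*}$ by choosing, via condition $(\bigstar)$ and \cite[Theorem 1.4]{bierstone}, a single simple normal crossing pair that simultaneously resolves the roofs for $\alpha$, $\beta$, and $\alpha\beta$. The main obstacle will be the second step: confirming that the simplicial-level isomorphisms $p_i^{*}$ assemble into a genuine isomorphism of mixed Hodge complexes on all of $Y$, not merely on $Y^{*}$, and that the resulting isomorphism is strictly compatible with the canonical extension of $W$ across $\Sigma$. This ultimately reduces to the strict compatibility of Deligne's canonical extension with morphisms of unipotent admissible variations, which is precisely why the hypothesis of unipotent monodromy on $R^jh_*\mathbb{Q}_{S^*}$ for every stratum $S$ and every $j$ is essential.
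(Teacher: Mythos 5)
There is a genuine gap at the heart of your argument. You claim that the simplicial-level maps $p_i|_{V'_p}\colon V'_p\to V_p$ and $p_i|_{T'_q}\colon T'_q\to T_q$, being disjoint unions of proper birational morphisms of smooth varieties over $Y$, ``induce isomorphisms on relative $\mathbb{Q}$-cohomology that are strictly compatible with the Hodge and weight filtrations,'' so that $\alpha^*=(p_1^*)^{-1}\circ p_2^*$ becomes an automorphism of the admissible variation $\mathcal V^k_{Y^*}$ itself. This is false: a proper birational morphism of smooth projective varieties over a curve (a blow-up at a point of a fiber, say) changes the relative $\mathbb{Q}$-cohomology, so the simplicial-level isomorphism of mixed Hodge complexes you posit does not exist. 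The birational invariance that is actually available holds only for structure sheaves: $Rp_{i*}\mathcal O_{V'}(-T')\simeq\mathcal O_V(-T)$ and $Rp_{i*}\mathcal O_S\simeq\mathcal O_{p_i(S)}$ for every stratum $S$ (Lemma~\ref{d-lem4.3}), which controls only $\Gr^0_F$, the lowest Hodge piece. The real difficulty of the lemma is that $\alpha$ is \emph{not} seen to act on $\mathcal V^k_Y$ with its weight filtration, yet it must nevertheless act on $W_m\Gr^0_F(\mathcal V^k_Y)$.

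The paper's proof sidesteps the nonexistent VMHS isomorphism by working directly with the filtered complex $C(\phi^*)$ that computes $\mathcal V^k_Y$, built from the semi-simplicial resolutions. There $W_{-p}\Gr^0_F C(\phi^*)^n$ is an explicit direct sum of $\mathcal O$-module complexes $(R(v_s)_*\mathcal O_{V_s})^{n+1-s}$ and $(R(u_t)_*\mathcal O_{T_t})^{n-t}$, so Lemma~\ref{d-lem4.3} together with the lifted diagram \eqref{d-eq4.6} already gives $\alpha^*\colon W_{-p}\Gr^0_F C(\phi^*)\overset{\sim}{\to} W_{-p}\Gr^0_F C(\phi^*)$. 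The remaining nontrivial step, absent from your proposal, is to identify the induced filtration on cohomology with the correct object, i.e.\ to prove $W_q\Gr^0_F H^{p+q}(C(\phi^*))\simeq W_q H^{p+q}(\Gr^0_F C(\phi^*))$. This is done via $E_1$-degeneration of the Hodge spectral sequence, $E_2$-degeneration of the weight spectral sequence, and Deligne's lemma on two filtrations, which is a different mechanism from the canonical-extension functoriality you appeal to --- because, again, there is no morphism of variations to which that functoriality could be applied.
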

In the proof of Lemma \ref{x-lem4.6}, we will use some arguments and 
constructions in \cite[Section 4]{fujino-fujisawa}. 
\begin{proof}[Proof of Lemma \ref{x-lem4.6}]
\setcounter{step}{0}
By assumption, $\alpha$ satisfies condition $(\bigstar)$ in Definition \ref{x-def4.5}. 
Therefore, we can take a commutative diagram 
\begin{equation*}\label{eq-zu7}
\xymatrix{
& (V', T') \ar[dl]_-{p_1}\ar[dr]^-{p_2}&  \\
(V, T) \ar@{-->}[rr]_-\alpha&& (V, T)
}
\end{equation*}
as in \eqref{d-eq4.2}. We note that 
$V'$ is projective 
over $Y$. 
From now on, we will use the same notation as in 
Definition \ref{x-def4.5}. 
We put $u=h\circ j\circ\varepsilon_T \colon T_\bullet \to Y$ and 
$v=h\circ \varepsilon _V \colon V_\bullet \to Y$. 
We set $E_\bullet =v^{-1}(\Sigma)_{\mathrm{red}}$ and 
$F_\bullet=u^{-1}(\Sigma)_{\mathrm{red}}$. 
Since $(V, T+\Supp h^*\Sigma)$ is a simple normal crossing 
pair by assumption, 
$E_\bullet$ and $F_\bullet$ are simple 
normal crossing divisors on $V_\bullet$ and $T_\bullet$, respectively. 
As in the proof of \cite[Lemma 4.12]{fujino-fujisawa}, 
we can construct a complex $C(\phi^*)$ 
on $Y$ equipped with filtrations $W$ and $F$ such that 
$H^k(C(\phi^*))\simeq 
\mathcal V^k_Y$, where 
$\mathcal V^k_Y$ is the canonical extension of 
$\mathcal V^k_{Y^*}=R^k(h|_{V^*})_*
\iota_!\mathbb Q_{V^*
\setminus T^*}\otimes \mathcal O_{Y^*}$, for every $k$.   
We note that the filtration $W$ is denoted by $L$ in \cite[Lemma 4.12]
{fujino-fujisawa}. 

\begin{step}\label{d-step1}
The spectral sequence 
$$
E^{p,q}_1(C(\phi^*), F)=H^{p+q}(\Gr^p_FC(\phi^*))\Rightarrow 
H^{p+q}(C(\phi^*))
$$ 
degenerates at $E_1$ 
(see the proof of \cite[Lemma 4.12]{fujino-fujisawa} 
and \cite[13.3]{fujino-slc-trivial}). 
Therefore, we have the following short exact sequences 
$$
\xymatrix{
0 \ar[r]& \ar[r]^-{s^{p+q}}H^{p+q}(F^1C(\phi^*)) & \ar[r]^-{t^{p+q}}H^{p+q}(C(\phi^*)) 
& H^{p+q}(\Gr^0_FC(\phi^*)) \ar[r]& 0
}
$$
for all $p$ and $q$. 
We note that $F^0C(\phi^*)=C(\phi^*)$ by construction. 
Let us consider the following commutative diagram. 
$$
\xymatrix{
0 \ar[r]& \ar[r]^-{s^{p+q}}H^{p+q}(F^1C(\phi^*)) & \ar[r]^-{t^{p+q}}H^{p+q}(C(\phi^*)) 
& H^{p+q}(\Gr^0_FC(\phi^*)) \ar[r]& 0\\ 
& & H^{p+q} (W_{-p}C(\phi^*))\ar[r]\ar[u]_-{a^{p+q}_{-p}}& 
H^{p+q}(W_{-p} \Gr^0_FC(\phi^*))\ar[u]_-{b^{p+q}_{-p}}& 
}
$$ 
By definition, 
we have 
$$
F^1H^{p+q}(C(\phi^*))=\mathrm{Im}\, s^{p+q}
$$ 
and 
$$
W_q H^{p+q}(C(\phi^*))=\mathrm{Im}\,a^{p+q}_{-p}
$$ 
for all $p$ and $q$. 
We put 
\begin{equation}\label{d-eq4.7}
W_qH^{p+q}(\Gr^0_FC(\phi^*)):=\mathrm{Im}\, b^{p+q}_{-p}
\end{equation}
for all $p$ and $q$. 
Then the map $t^{p+q}$ 
induces 
\begin{equation}\label{d-eq4.8}
\xymatrix{
\Gr^0_FH^{p+q}(C(\phi^*)) \ar[r]^-{\sim} & H^{p+q} (\Gr^0_FC(\phi^*)) \\ 
W_q \Gr^0_FH^{p+q} (C(\phi^*)) \ar[r]_-{i^{p+q}_q}\ar@{^{(}->}[u]& 
W_q H^{p+q} (\Gr^0_FC(\phi^*)) \ar@{^{(}->}[u]
}
\end{equation}
for all $p$ and $q$. 
We will prove that $i^{p+q}_q$ are isomorphisms 
for all $p$ and $q$ in Step \ref{d-step2}. 
\end{step}
\begin{step}\label{d-step2}
Let us analyse the spectral sequence 
\begin{equation}\label{d-eq4.9} 
E^{p, q}_1(C(\phi^*), W)\Rightarrow H^{p+q}(C(\phi^*))
\end{equation} 
in detail. 
Let $\Omega_{V_{p+1}/Y}(\log E_{p+1})$ and 
$\Omega_{T_p/Y}(\log F_p)$ be relative logarithmic de Rham complexes 
of $v_{p+1} \colon V_{p+1}\to Y$ and $u_p \colon T_p\to Y$, respectively. 
Then we have  
$$
\left(E^{p, q}_1(C(\phi^*), W), F\right)=
\left(R^q(v_{p+1})_*\Omega_{V_{p+1}/Y}(\log E_{p+1}), F\right)
\oplus \left(R^q(u_p)_*\Omega_{T_p/Y}(\log F_p), F\right) 
$$ 
by construction. We note that the 
differentials of the spectral sequence \eqref{d-eq4.9}  
are strictly compatible with the filtration induced by $F$ 
(see \cite[(1.1.5)]{deligne}, \cite[Remark 3.2]{fujino-fujisawa}, 
and \cite[A.~3.1]{peters-steenbrink}) 
and that the spectral sequence \eqref{d-eq4.9} 
degenerates at $E_2$. 
We do not repeat the proof of the above facts here. 
For the proof, see the first part of the proof of 
\cite[Lemma 4.12]{fujino-fujisawa} 
and \cite[13.3]{fujino-slc-trivial}.  

The following argument corresponds to the strictness of the filtration $F$ 
on the $E_0$-term of the spectral sequence 
$E^{p,q}_r(C(\phi^*), W)$ (see \cite[13.3]{fujino-slc-trivial}). 
By \cite[(2.11) Theorem]{steenbrink}, 
$R^b(u_p)_*\Omega^a_{T_p/Y}(\log F_p)$ is 
locally free for any $a$, $b$, and $p$. 
Therefore, the spectral sequence 
$$
R^b(u_p)_*\Omega^a_{T_p/Y}(\log F_p)
\Rightarrow 
R^{a+b}(u_p)_*\Omega_{T_p/Y}(\log F_p)
$$ 
degenerates at $E_1$. In particular, 
$$
\Gr^0_FR^q(u_p)_*\Omega_{T_p/Y}(\log F_p)\simeq 
R^q(u_p)_*\mathcal O_{T_p}
$$ 
holds for any $p$, $q$. 
By the same way, we see 
that 
$$\Gr^0_FR^q(v_{p+1})_*\Omega_{V_{p+1}/Y}(\log E_{p+1})\simeq 
R^q(v_{p+1})_*\mathcal O_{V_{p+1}}
$$ 
holds for any $p$, $q$. 
Thus we have 
\begin{equation}\label{d-eq4.10}
\begin{split}
&\Gr^0_FE^{p,q}_1(C(\phi^*), W) \\&=\Gr^0_FR^q(v_{p+1})_*
\Omega_{V_{p+1}/Y}(\log E_{p+1}) 
\oplus \Gr^0_F R^q(u_p)_*\Omega_{T_p/Y}(\log F_p)\\ 
&\simeq R^q(v_{p+1})_*\mathcal O_{V_{p+1}}\oplus 
R^q(u_p)_*\mathcal O_{T_p}. 
\end{split} 
\end{equation}

By taking $\Gr^0_F$ of the spectral sequence 
\eqref{d-eq4.9}, 
we obtain the following spectral sequence 
\begin{equation*}
E^{p,q}_1(\Gr^0_FC(\phi^*), W)\Rightarrow 
H^{p+q}(\Gr^0_FC(\phi^*)). 
\end{equation*} 
Note that 
$$
\Gr^0_FE^{p,q}_1(C(\phi^*), W)\simeq 
E^{p,q}_1(\Gr^0_FC(\phi^*), W)
$$ 
holds as we saw in \eqref{d-eq4.10}. Moreover, 
$$
\Gr^0_FE^{p, q}_r(C(\phi^*), W) \simeq E^{p, q}_r(\Gr^0_FC(\phi^*), W)
$$ 
holds 
for every $r\geq 0$ 
by the lemma on two filtrations 
(see \cite[Propositions (7.2.5) and (7.2.8)]{deligne2} and 
\cite[Theorem 3.12]{peters-steenbrink}). 
Hence, we obtain 
\begin{equation}\label{d-eq4.11}
\begin{split}
\Gr^0_F\Gr^W_q\!H^{p+q}(C(\phi^*))&\simeq \Gr^0_FE^{p,q}_2(C(\phi^*), W)
\\ &\simeq E^{p,q}_2(\Gr^0_FC(\phi^*), W) 
\simeq 
\Gr^W_q\!H^{p+q}(\Gr^0_FC(\phi^*))
\end{split}
\end{equation}
for all $p$ and $q$. 
We note that the filtration $W$ on $H^{p+q}(\Gr^0_FC(\phi^*))$ 
is the one defined in \eqref{d-eq4.7}. We also note that 
$\Gr^0_F\Gr^W_q\!H^{p+q}(C(\phi^*))$ is canonically 
isomorphic to $\Gr^W_q\!\Gr^0_FH^{p+q}(C(\phi^*))$. 
Thus, we can check that 
\begin{equation}\label{d-eq4.12}
\xymatrix{
i^{p+q}_q \colon W_q\Gr^0_FH^{p+q}(C(\phi^*)) \ar[r]&
W_qH^{p+q}(\Gr^0_FC(\phi^*))
}
\end{equation} 
in \eqref{d-eq4.8} are isomorphisms 
for all $p$ and $q$ inductively by using \eqref{d-eq4.8} and \eqref{d-eq4.11} . 
\end{step}

\begin{step}\label{d-step3} 
In this proof, we did not define the filtration $W$ on 
$C(\phi^*)$ explicitly. For the details of 
the filtration $W$ on $C(\phi^*)$, 
which is denoted by 
$L$ in \cite[Section 4]{fujino-fujisawa}, see 
(4.2.1) and (4.8.2) in \cite[Section 4]{fujino-fujisawa}.
By construction, we have  
\begin{equation*}
\begin{split}
W_{-p} \Gr^0_FC(\phi^*)^n &=
W_{-p-1}(Rv_*\mathcal O_{V_\bullet})^{n+1}\oplus 
W_{-p}(Ru_*\mathcal O_{T_\bullet})^n\\ 
&= \bigoplus_{s\geq p+1} (R(v_s)_*\mathcal O_{V_s})^{n+1-s}
\oplus \bigoplus _{t\geq p} (R(u_t)_*\mathcal O_{T_t})^{n-t}. 
\end{split}
\end{equation*} 
Therefore, by Lemma \ref{d-lem4.3} and the commutative 
diagram \eqref{d-eq4.6} in Definition \ref{x-def4.5}, 
$\alpha$ induces isomorphisms 
$$
\alpha^* \colon W_{-p}\Gr^0_FC(\phi^*)\overset{\sim}{\longrightarrow} 
W_{-p}\Gr^0_FC(\phi^*)
$$ 
for all $p$. 
Thus $\alpha$ induces isomorphisms 
$$
\alpha^* \colon W_qH^{p+q}(\Gr^0_FC(\phi^*))
\overset{\sim}{\longrightarrow} W_qH^{p+q}(\Gr^0_FC(\phi^*))
$$ 
for all $p$ and $q$ by the following commutative diagram 
\begin{equation*}
\xymatrix{
H^{p+q}(W_{-p}\Gr^0_FC(\phi^*)) \ar[d]^-\wr_-{\alpha^*}\ar[r]
& H^{p+q}(\Gr^0_FC(\phi^*))\ar[d]^-\wr_-{\alpha^*}
\\
H^{p+q}(W_{-p}\Gr^0_FC(\phi^*))\ar[r]&
H^{p+q}(\Gr^0_FC(\phi^*))
}
\end{equation*} 
and the definition of the filtration $W$ in \eqref{d-eq4.7}. 
Hence, we obtain isomorphisms 
$$
\alpha^* \colon W_mH^k(\Gr^0_FC(\phi^*))\overset{\sim}
{\longrightarrow}W_mH^k(\Gr^0_FC(\phi^*))
$$ 
for all $m$ and $k$ by putting 
$p=k-m$ and $q=m$. 
By \eqref{d-eq4.12} and the fact that 
$\mathcal V^k_Y\simeq H^k(C(\phi^*))$, we obtain the 
desired isomorphisms 
$$
\alpha^* \colon W_m\Gr^0_F(\mathcal V^k_Y)\overset{\sim}
{\longrightarrow}W_m\Gr^0_F(\mathcal V^k_Y)
$$ 
for all $m$ and $k$. 
\end{step}
When the group $G$ acts on $(V, T)$ birationally over $Y$ 
such that every element $\alpha\in G$ satisfies 
condition $(\bigstar)$ in Definition 
\ref{x-def4.5}, it is easy to see that 
$G$ also acts on $W_m\Gr^0_F(\mathcal V^{k}_Y)$ for all $m$ and 
$k$ by the 
above result. 
\end{proof}

We make an important remark on dual variations of 
mixed Hodge structure. We will use it 
in Step \ref{C-step4} in the proof of Theorem \ref{B-thm1.3}.  

\begin{rem}
[{see \cite[Remarks 3.15 and 7.4]{fujino-fujisawa}}]\label{x-rem4.7}
We use the same notation and assumption as in Lemma \ref{x-lem4.6}. 
Let us consider the dual local system of $R^k(h|_{V^*})_*\iota_!\mathbb Q_{
V^*\setminus T^*}$ and 
the dual variation of mixed Hodge structure on it. 
Then the locally free sheaf $(\mathcal V^k_{Y^*})^*$ carries 
the Hodge filtration $F$ and the weight filtration $W$ defined 
as in \cite[Remark 3.15]{fujino-fujisawa}. 
By the construction of the Hodge filtration $F$, 
$$
\Gr^0_F\!\left((\mathcal V^k_Y)^*\right)\simeq 
\left(\Gr^0_F(\mathcal V^k_Y)\right)^*
$$
holds, where $(\mathcal V^k_Y)^*$ is 
the canonical extension of $(\mathcal V^k_{Y^*})^*$. 
More generally, 
$$
\Gr^{-p}_F\!\left((\mathcal V^k_Y)^*\right)\simeq 
\left(\Gr^p_F(\mathcal V^k_Y)\right)^*
$$
holds for every $p$. 
We note that  
$\Gr^0_F\!\left((\mathcal V^k_Y)^*\right)=F^0\!\left((\mathcal V^k_Y)^*\right)$, 
the canonical extension of the lowest piece of the Hodge filtration. 
By taking the dual of Lemma \ref{x-lem4.6}, 
$G$ acts on $W_m\Gr^0_F\!\left((\mathcal V^k_Y)^*\right)$ for 
every $m$, 
where $W$ denotes the canonical extension of 
the weight filtration of $(\mathcal V^k_{Y^*})^*$. 
We note that we have  
$$
\Gr^W_m\Gr^p_F\!\left((\mathcal V^k_Y)^*\right)
\simeq \left( \Gr^W_{-m}\Gr^{-p}_F(\mathcal V^k_Y)\right)^*
$$ 
for all $p$ and $m$ by construction. 
\end{rem}

We close this section with the following lemma, which is 
more or less well known to the experts (see \cite{zucker}, 
\cite{peters}, \cite{kollar}, and \cite{fujino-fujisawa2}). 
We will use it in the proof of Theorem \ref{B-thm1.3} in Section \ref{C-sec5}. 

\begin{lem}\label{d-lem4.7} 
Let $C$ be a smooth projective curve 
and let $C_0$ be a non-empty Zariski open set of $C$. 
Let $V_0$ be a polarizable variation of $\mathbb Q$-Hodge 
structure over $C_0$ with unipotent monodromies around $\Sigma=C\setminus 
C_0$. Let $F^b$ be the canonical extension of the lowest piece of 
the Hodge filtration. 
Let $\mathcal L$ be a line bundle on $C$ which is a direct summand of $F^b$. 
Assume that $\deg_C\mathcal L=0$. 
Then $\mathcal L|_{C_0}$ is a flat subbundle of $F^b|_{C^0}$. 
\end{lem}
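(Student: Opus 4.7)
The plan is to prove this by a curvature-theoretic argument in the style of Griffiths, Zucker, Peters, and Koll\'ar: the canonical extension of the lowest Hodge piece of a polarized VHS with unipotent monodromies at infinity is Griffiths semi-positive, and a degree-zero direct summand of such a bundle must therefore have identically vanishing Chern curvature, which in turn forces flatness with respect to the Gauss--Manin connection.

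First I would endow $V_0 \otimes \mathcal O_{C_0}$ with the Hodge metric $h$ induced by the polarization and restrict $h$ to $F^b|_{C_0}$. Griffiths' curvature formula for the lowest piece of a polarized VHS makes $(F^b|_{C_0}, h|_{F^b})$ Griffiths semi-positive. Since $\mathcal L$ is an $\mathcal O_C$-module direct summand of $F^b$, I would fix a splitting $F^b = \mathcal L \oplus \mathcal L'$, let $\pi \colon F^b \twoheadrightarrow \mathcal L$ be the projection, and equip $\mathcal L|_{C_0}$ with the quotient metric $h_\mathcal L$ induced from $h|_{F^b}$ via $\pi$. Quotients of Griffiths semi-positive Hermitian bundles are again Griffiths semi-positive (Gauss equation for quotients), so $c_1(\mathcal L|_{C_0}, h_\mathcal L) \geq 0$ as a $(1,1)$-form on $C_0$.

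Next I would use the unipotent monodromy hypothesis to pass from pointwise positivity to a global identity. Because the Hodge metric on the canonical extension has only logarithmic singularities along $\Sigma$ in the unipotent case, the first Chern class of the canonical extension is computed by the integrated Chern form, giving
\[
\int_{C_0} c_1(\mathcal L|_{C_0}, h_\mathcal L) = \deg_C \mathcal L = 0.
\]
A non-negative $(1,1)$-form on a smooth complex curve whose integral vanishes must vanish pointwise, so I conclude $c_1(\mathcal L|_{C_0}, h_\mathcal L) \equiv 0$ on $C_0$.

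The last step, and the main obstacle, is to translate this vanishing into flatness of $\mathcal L|_{C_0}$ as a sub-bundle of $\mathcal V|_{C_0}$. By Griffiths' curvature identity applied to the quotient metric, the vanishing of $c_1(\mathcal L|_{C_0}, h_\mathcal L)$ is equivalent to the vanishing on $\mathcal L|_{C_0}$ of the Higgs-field-type component of $\nabla$ that measures the failure of $\nabla$ to preserve $\mathcal L|_{C_0}$ inside $\mathcal V|_{C_0}$. Once this component vanishes, $\nabla$ restricts to a connection on $\mathcal L|_{C_0}$, exhibiting $\mathcal L|_{C_0}$ as a flat sub-bundle contained in $F^b|_{C_0}$. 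The delicacy lies in using the direct summand structure of $\mathcal L$ in $F^b$, together with the precise form of the Griffiths identity, to isolate exactly the component of $\nabla$ relevant for flatness; this is standard material and can be carried out following the treatments of polarized VHS on curves in \cite{zucker}, \cite{peters}, \cite{kollar}, and \cite{fujino-fujisawa2}.
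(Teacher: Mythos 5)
Your proposal follows essentially the same route as the paper's own proof: restrict the Hodge metric of $F^b$ to get a metric on $\mathcal L|_{C_0}$ with semipositive curvature, use the unipotent-monodromy integral formula for the degree of the canonical extension to deduce that the curvature integrates to zero and hence vanishes identically, and conclude flatness. Your version is somewhat more careful than the paper's terse argument in two spots the paper glosses over — choosing the quotient metric (rather than the sub-bundle metric, whose Griffiths semipositivity is not automatic) and spelling out that pointwise vanishing of the curvature kills the relevant component of $\nabla$ — but the underlying argument and the ingredients invoked (\cite{zucker}, \cite{peters}, \cite{kollar}, \cite{fujino-fujisawa2}) are the same.
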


\begin{proof}
Let $h_0$ be the smooth hermitian metric on $\mathcal L|_{C_0}$ induced 
by the Hodge metric of $F^b|_{C_0}$. 
Then $\frac{\sqrt{-1}}{2\pi} \Theta_{h_0}(\mathcal L|_{C_0})$ is a semipositive 
smooth $(1, 1)$-form on $C_0$. 
We note that $\Theta_{h_0}(\mathcal L|_{C_0})$ is the curvature tensor of the Chern 
connection of $(\mathcal L|_{C_0}, h_0)$. 
Then $$\deg_C\mathcal L=\frac{\sqrt{-1}}{2\pi}\int _{C_0} 
\Theta_{h_0}(\mathcal L|_{C_0})$$ holds (see, for example, \cite[Theorem 5.1]{kollar}). 
Note that the right hand side is an improper integral.  
By assumption, $\deg _C\mathcal L=0$. 
This implies that $\Theta_{h_0}(\mathcal L|_{C_0})=0$. 
Therefore, $\mathcal L|_{C_0}$ is a flat subbundle of $F^b|_{C_0}$. 
\end{proof}

\begin{rem}\label{d-rem4.8}
In Lemma \ref{d-lem4.7}, 
the smooth hermitian metric $h_0$ on $\mathcal L|_{C_0}$ can be 
extended naturally to a singular hermitian metric $h$ 
on $\mathcal L$ in the sense of Demailly such that 
$\sqrt{-1}\Theta_h(\mathcal L)$ is positive in the sense of 
currents and that the Lelong 
numbers of $h$ are zero everywhere. 
For the details, see \cite[Theorem 1.1]{fujino-fujisawa2}. 
\end{rem}

\section{Proof of Theorem \ref{B-thm1.3}}\label{C-sec5}

In this section, we prove Theorem \ref{B-thm1.3} and 
Corollary \ref{B-cor1.4}.  

\medskip 

Let us prepare an easy lemma. 
By this lemma, we can reduce the problem to the case where the base 
space is a curve. 

\begin{lem}\label{C-lem5.1}
Let $Y$ be a smooth projective irreducible variety with $\dim Y\geq 2$ and 
let $N$ be a numerically trivial Cartier divisor on $Y$. 
Let $H$ be a smooth ample Cartier divisor on $Y$ such 
that $H$ contains no irreducible components of $\Supp N$. 
Then $N\sim 0$ if and only if $N|_H\sim 0$. 
\end{lem}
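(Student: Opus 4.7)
The forward implication $N\sim 0\Rightarrow N|_H\sim 0$ is immediate by pullback, so the plan focuses entirely on the converse. The approach will be purely cohomological, via the short exact sequence
\[
0 \to \mathcal O_Y(N - H) \to \mathcal O_Y(N) \to \mathcal O_H(N|_H) \to 0,
\]
with the goal of showing the induced restriction map on global sections is an isomorphism onto $H^0(H,\mathcal O_H(N|_H))\simeq H^0(H,\mathcal O_H)=\mathbb C$ (using the hypothesis $N|_H\sim 0$ together with the connectedness of $H$, which itself follows from Kodaira vanishing applied to $\mathcal O_Y(-H)$). Once a nonzero section of $\mathcal O_Y(N)$ is produced, numerical triviality will force the effective representative to be zero.

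First I would dispose of $H^0(Y,\mathcal O_Y(N-H))=0$ by an elementary intersection-theoretic computation: since $N\equiv 0$ and $H$ is ample, $(N-H)\cdot H^{n-1}=-H^n<0$ (with $n=\dim Y$), so $N-H$ cannot be linearly equivalent to an effective divisor. The main step will be the vanishing $H^1(Y,\mathcal O_Y(N-H))=0$. The plan is to observe that ampleness is determined by numerical intersection numbers via Nakai--Moishezon, so $H-N\equiv H$ is itself ample; Kodaira vanishing then gives $H^i(Y,K_Y+(H-N))=0$ for all $i>0$, and Serre duality on the smooth projective variety $Y$ of dimension $n\geq 2$ converts this into
\[
H^1(Y,\mathcal O_Y(N-H))\simeq H^{n-1}(Y,K_Y+H-N)^{*}=0.
\]
This is the one and only place where the hypothesis $\dim Y\geq 2$ enters, and it is the step I expect to be the conceptual crux, although the input itself is classical.

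Combining the two vanishings, the long exact sequence gives an isomorphism $H^0(Y,\mathcal O_Y(N))\xrightarrow{\sim} H^0(H,\mathcal O_H(N|_H))\simeq\mathbb C$, so $N$ is linearly equivalent to some effective divisor $D$. Writing $D=\sum a_iD_i$ with $a_i>0$, the identity
\[
0=D\cdot H^{n-1}=\sum a_i(D_i\cdot H^{n-1})
\]
together with $D_i\cdot H^{n-1}>0$ by ampleness of $H$ forces the sum to be empty, i.e., $D=0$, whence $N\sim 0$. No further obstacle is anticipated: the argument is entirely standard once Kodaira vanishing is brought in for the ample divisor $H-N$.
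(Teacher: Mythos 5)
Your proposal is correct and follows essentially the same route as the paper: the restriction exact sequence $0 \to \mathcal O_Y(N-H) \to \mathcal O_Y(N) \to \mathcal O_H(N|_H) \to 0$, vanishing of $H^0$ and $H^1$ of $\mathcal O_Y(N-H)$ via ampleness considerations and Kodaira vanishing (which is where $\dim Y \geq 2$ enters), and then reading off the linear equivalence from the resulting isomorphism on global sections. You merely spell out a few details the paper leaves implicit, namely why $H^0(Y,\mathcal O_Y(N-H))=0$, the Nakai--Moishezon observation that $H-N$ is ample, and the closing step that a numerically trivial divisor with a nonzero section must be linearly trivial.
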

\begin{proof}
We consider the following long exact sequence 
\begin{equation*}
\begin{split}
0&\to H^0(Y, \mathcal O_X(N-H))\to 
H^0(Y, \mathcal O_Y(N))\to H^0(H, \mathcal O_H(N|_H))
\\&\to H^1(Y, \mathcal O_Y(N-H))\to \cdots. 
\end{split}
\end{equation*} 
It is obvious that $H^0(Y, \mathcal O_Y(N-H))=0$. 
By the Kodaira vanishing theorem, 
we have $H^1(Y, \mathcal O_Y(N-H))=0$. 
Therefore, 
$H^0(Y, \mathcal O_Y(N))\simeq 
H^0(H, \mathcal O_H(N|_H))$ holds. 
In particular, $N\sim 0$ if and only if $N|_H\sim 0$. 
\end{proof}

Let us start the proof of Theorem \ref{B-thm1.3}. 
We adapt Floris's proof of 
Theorem \ref{B-thm1.3} for lc-trivial 
fibrations (see \cite{floris}) to our setting, that is, 
basic slc-trivial fibrations. 

\begin{proof}[Proof of Theorem \ref{B-thm1.3}]
This proof heavily depends on \cite[Section 6]{fujino-slc-trivial}. 
Let $\sigma\colon Y'\to Y$ be a projective 
birational morphism from a smooth 
projective variety $Y'$. By considering 
the induced basic slc-trivial fibration by $\sigma\colon Y'\to Y$, 
we may assume that $Y$ is a smooth projective variety. 
\setcounter{step}{0}
\begin{step}\label{C-step1} 
In this step, we construct a cyclic cover of the generic 
fiber of $f:X\to Y$ following \cite[6.1 and 6.2]{fujino-slc-trivial}. 
Let $f \colon (X, B)\to Y$ be a basic slc-trivial fibration. 
Let $F$ be a general fiber of $f \colon X\to Y$. 
We put 
$$
b=\min\{m \in \mathbb Z_{>0}\, |\, m(K_F+B_F)=m(K_X+B)|_F\sim 0\}.  
$$ 
Then we can write 
\begin{equation}\label{C-eq5.1} 
K_X+B+\frac{1}{b}(\varphi)=f^*(K_Y+B_Y+M_Y)
\end{equation}
with $\varphi\in \Gamma(X, \mathcal K^*_X)$, 
where $B_Y$ is the discriminant $\mathbb Q$-divisor 
and $M_Y$ is the moduli $\mathbb Q$-divisor 
of $f \colon (X, B)\to Y$. 
By taking some suitable blow-ups 
(see \cite[Theorem 1.4 and Section 8]{bierstone} and 
\cite[Lemma 2.11]{fujino-ann}), 
we may assume that $\Supp (B-f^*(B_Y+M_Y))$ is a simple 
normal crossing divisor on $X$, 
$(B^h)^{=1}$ is Cartier, 
and every stratum of $(X, (B^h)^{=1})$ is dominant 
onto $Y$. 
We take the $b$-fold cyclic cover $\pi \colon \widetilde X\to X$ associated 
to \eqref{C-eq5.1}, that is, 
$$
\widetilde X=\Spec _X \bigoplus _{i=0}^{b-1} 
\mathcal O_X(\lfloor i\Delta\rfloor), 
$$
where $\Delta=K_{X/Y}+B-f^*(B_Y+M_Y)$. 
We note that $\pi:\widetilde X\to X$ is a finite Galois 
cover by construction (see \cite[Proposition 6.3 (i)]{fujino-slc-trivial}). 
We put $K_{\widetilde X}+B_{\widetilde X}=\pi^*(K_X+B)$. 
By construction, it is easy to see that 
$(B^h_{\widetilde X})^{=1}=\pi^*((B^h)^{=1})$ and 
that $(\widetilde X, (B^h_{\widetilde X})^{=1})$ is 
semi-log canonical. 
Moreover, every slc stratum of $(\widetilde X, (B^h_{\widetilde X})^{=1})$ 
is dominant onto $Y$. 
We take a projective 
birational morphism $d \colon V\to \widetilde X$
from a simple normal crossing variety $V$ such that 
$d$ is an isomorphism 
over the generic point of every slc stratum of $(\widetilde X, 
(B^h_{\widetilde X})^{=1})$ by \cite[Theorem 1.4]{bierstone}. We 
put $K_V+B_V=d^*(K_{\widetilde X}+B_{\widetilde X})$. 
Then we 
get the following commutative diagram 
\begin{equation}\label{C-eq5.2}
\xymatrix{
(X, B)\ar[d]_-f & \widetilde X \ar[dl]_-{\widetilde f}
\ar[l]_-\pi& (V, B_V)\ar[dll]^-h\ar[l]_-d\\ 
Y & & 
}
\end{equation} 
with $g=\pi\circ d$. 
By taking a suitable birational modification of $Y$ and 
considering induced (pre-)basic slc-trivial fibrations as in 
\cite[6.2]{fujino-slc-trivial}, 
we may further assume that the following properties hold for 
$$
K_X+B+\frac{1}{b}(\varphi)=f^*(K_Y+B_Y+M_Y)
$$ 
and 
$$
h \colon (V, B_V)
\overset{g}{\longrightarrow} (X, B)\overset{f}{\longrightarrow} Y.  
$$
\begin{itemize}
\item[(a)] $Y$ is a smooth projective 
irreducible variety, and $X$ and $V$ are projective 
simple 
normal crossing varieties. 
\item[(b)] There exist simple normal crossing divisors 
$\Sigma_X$, $\Sigma_V$, and $\Sigma_Y$ 
on $X$, $V$, and $Y$, respectively. 
\item[(c)] $f$ and $h$ are projective surjective 
morphisms. 
\item[(d)] The supports of 
$B$, $B_V$, and $B_Y$, $M_Y$ are 
contained in 
$\Sigma_X$, $\Sigma_V$, and $\Sigma_Y$, respectively. 
\item[(e)] Every stratum of $(X, \Sigma^h_X)$ 
and $(V, \Sigma^h_V)$ is smooth 
over $Y\setminus \Sigma_Y$. 
\item[(f)] $f^{-1}(\Sigma_Y)\subset \Sigma_X$, $f(\Sigma^v_X)\subset 
\Sigma_Y$, and $h^{-1}(\Sigma_Y)\subset 
\Sigma_V$, $h(\Sigma^v_V)\subset 
\Sigma_Y$. 
\item[(g)] $(B^h)^{=1}$ and $(B^h_V)^{=1}$ are Cartier. 
\end{itemize}
We note that conditions (a)--(g) above are nothing but the conditions 
stated just before \cite[Proposition 6.3]{fujino-slc-trivial}. 
As we saw in the proof of \cite[Theorem 1.2]{fujino-slc-trivial} 
(see \cite[Section 9]{fujino-slc-trivial}), 
$\mathbf M=\overline{\mathbf M_Y}$ holds and 
$\mathbf M_Y$ is a nef $\mathbb Q$-divisor on $Y$. 
By assumption, $\mathbf M_Y\equiv 0$. 
If $\nu\colon Y''\to Y$ is a finite surjective morphism from a 
smooth 
projective irreducible variety $Y''$, 
then it is easy to see that $\mathbf M_Y\sim _{\mathbb Q}0$ if and 
only if $\nu^*\mathbf M_Y\sim _{\mathbb Q}0$. 
Therefore, by taking a unipotent reduction 
(see \cite[Lemma 7.3]{fujino-slc-trivial}), 
we may further assume that 
\begin{itemize}
\item[(A)] for any irreducible component $P$ of 
$\Supp \Sigma_Y$, there exists a prime divisor $Q$ on $V$ such that 
$\mult _Q(-B_V+h^*B_Y)=0$, 
$h(Q)=P$, and 
$\mult _Qh^*P=1$, 
\item[(B)] all the local monodromies on the local system 
$$R^{\dim V-\dim Y} (h|_{V^*})_*\iota_!\mathbb Q_{V^*\setminus 
(B_{V^*}^h)^{=1}}$$ around 
$\Sigma_Y$ are unipotent, where 
$Y^*=Y\setminus \Sigma_Y$, $V^*=h^{-1}(Y^*)$, $B_{V^*}=(B_V)|_{V^*}$, 
and $\iota\colon V^*\setminus (B_{V^*}^h)^{=1}\hookrightarrow V^*$ is the natural 
open immersion, and 
\item[(C)] all the local monodromies on the local system 
$R^kh_*\mathbb Q_{S^*}$ around $\Sigma_Y$ are unipotent for any stratum $S$ 
of $(V, (B^h_V)^{=1})$ and every $k$, where $S^*=S|_{V^*}$.  
\end{itemize}
Note that the above assumptions (A) and (B) are nothing but 
the assumptions in (iv) and (v) in \cite[Proposition 6.3]{fujino-slc-trivial}. 
We also note that 
we do not treat the assumption (C) in the original statement of 
\cite[Lemma 7.3]{fujino-slc-trivial}. 
Therefore, we have to make $N_j$ in the proof of 
\cite[Lemma 7.3]{fujino-slc-trivial} sufficiently divisible 
in order to make the monodromy on the local system 
$R^kh_*\mathbb Q_{S^*}$ around 
$P_j$, an irreducible component of $\Sigma_Y$, unipotent for any 
stratum $S$ of $(V, (B^h_V)^{=1})$ and 
every $k$ when we take a finite cover $\nu \colon Y''\to Y$ for a unipotent 
reduction (see \cite[Lemma 7.3]{fujino-slc-trivial}). 
\end{step}
\begin{step}\label{C-step2}
We assume that $\dim Y\geq 2$. 
Then we take a general ample Cartier divisor 
$H$ on $Y$ and put $Z=f^*H$ and 
$W=h^*H$. 
In this situation, 
$$
K_X+Z+B+\frac{1}{b}(\varphi)=f^*(K_Y+H+B_Y+M_Y). 
$$ 
By adjunction, 
$$
K_Z+B|_Z+\frac{1}{b}(\varphi|_Z)=f^*(K_H+B_Y|_H+M_Y|_H)
$$ 
holds. 
It is not difficult to see that 
$f|_Z \colon (Z, B|_Z)\to H$ is a basic slc-trivial fibration 
and $$h|_W \colon (W, B_V|_W)\overset {g|_W}
{\longrightarrow} (Z, B|_Z)\overset{f|_Z}{\longrightarrow} 
H$$ satisfies conditions (a)--(g), 
(A), (B), and (C) in Step \ref{C-step1}. 
We note that $B_Y|_H=B_H$ and $M_Y|_H=M_H$ hold, 
where $B_H$ (resp.~$M_H$) is 
the discriminant (resp.~moduli) $\mathbb Q$-divisor 
of $f|_Z \colon (Z, B|_Z)\to H$. 
By Lemma \ref{C-lem5.1}, 
$M_Y\sim _{\mathbb Q}0$ if and only if $M_Y|_H\sim 
_{\mathbb Q}0$. 
Therefore, we can replace $f \colon (X, B)\to Y$ with $f|_Z \colon 
(Z, B|_Z)\to 
H$. 
By repeating this reduction finitely many times, 
we may assume that $Y$ is a smooth projective 
curve. 
\end{step}

\begin{step}\label{C-step3} 
In Step \ref{C-step1}, 
we have already seen that $\pi:\widetilde X\to X$ is Galois. 
Let $G=\mathbb Z/b\mathbb Z$ be the Galois group of 
the $b$-fold cyclic cover 
$\pi \colon \widetilde X\to X$. 
The action of $G$ on $\widetilde X$ preserves the slc strata 
of $(\widetilde X, (B^h_{\widetilde X})^{=1})$ by construction. 
Therefore, 
any element $\alpha$ of $G$ induces a birational map between 
$(V, T)$ and $(V, T)$ over $X$, where 
$T=(B^h_V)^{=1}$. 
From now on, we will check that $\alpha$ satisfies 
condition $(\bigstar)$ in Definition \ref{x-def4.5}. 
As usual, we can take a commutative diagram 
\begin{equation*}
\xymatrix{
& (V', T') \ar[dd]^(.35){g'}\ar[dl]_-{p_1}\ar[dr]^-{p_2}&  \\
(V, T) \ar[dr]_-g\ar[ddr]_-h\ar@{-->}[rr]^(.40){\alpha}&& (V, T)\ar[dl]^-g
\ar[ddl]^-h\\ 
& X\ar[d]^-f& \\ 
& Y& 
}
\end{equation*}
by using \cite[Theorem 1.4]{bierstone}, where $(V', T')$ is a simple 
normal crossing pair such that 
$T'$ is reduced, and $p_i$ is a projective 
birational morphism between $(V', T')$ and $(V, T)$ for 
$i=1, 2$. 
We put $C=(B^h)^{=1}$. 
The irreducible decomposition of $X$ and $C$ are given by 
$$
X=\bigcup _{i\in I} X_i, \quad\text{and}
\quad C=\bigcup _{\lambda\in \Lambda} C_\lambda
$$ 
respectively as in \cite[4.14]{fujino-fujisawa}. 
We put $V=\bigcup _{i\in I}V_i$ and $V_i=\bigcup _j V_{i_j}$, 
where $V_{i_j}$ runs over irreducible components 
of $V$ such that 
$g(V_{i_j})=X_i$. 
We put $T=\bigcup _{\lambda\in \Lambda} T_\lambda$ 
and $T_\lambda=\bigcup _l T_{\lambda_l}$, 
where $T_{\lambda_l}$ runs over irreducible components of $T$ 
such that $g(T_{\lambda_l})=C_\lambda$. 
Note that $T_{\lambda}$ and $V_i$ are disjoint unions 
of some strata of $(V, T)$. 
By applying the same construction as above to 
$(V', T')$ and $g':=g\circ p_1=g\circ p_2 \colon V'\to X$, 
we get 
$V'=\bigcup _{i\in I} V'_i$ and $T'=\bigcup _{\lambda\in \Lambda}
T'_\lambda$. 
We apply the same construction as in \cite[4.14]{fujino-fujisawa} 
to $V=\bigcup _{i\in I}V_i$ and $T=\bigcup _{\lambda\in \Lambda} 
T_\lambda$ 
(resp.~$V'=\bigcup _{i \in I} V'_i$ and $T'=\bigcup _{\lambda\in \Lambda} 
T'_\lambda$) instead of 
$X=\bigcup _{i\in I}X_i$ and $D=\bigcup _{\lambda\in \Lambda} 
D_\lambda$ in \cite[4.14]{fujino-fujisawa}. 
Then we can construct semi-simplicial 
resolutions $\varepsilon _T \colon T_\bullet \to T$ and 
$\varepsilon _V \colon V_\bullet \to V$ 
(resp.~$\varepsilon _{T'} \colon T'_\bullet \to T'$ and $\varepsilon _{V'} 
\colon V'_\bullet \to V'$). 
By construction, these semi-simplicial 
resolutions satisfy the conditions stated in Definition \ref{x-def4.5}. 
Therefore, $\alpha$ satisfies condition $(\bigstar)$. 
This is what we wanted. 
\end{step}
\begin{step}\label{C-step4}
We note that $M_Y$ is a Cartier divisor on $Y$ and 
that $\mathcal O_Y(M_Y)$ is a direct summand 
of 
$$\left(\Gr^0_F(\mathcal V^d_Y)\right)^*\simeq 
\Gr^0_F\!\left((\mathcal V^d_Y)^*\right), 
$$ where 
$d=\dim X-\dim Y$ (see \cite[Proposition 6.3]{fujino-slc-trivial}). 
More precisely, by construction, 
$\mathcal O_Y(M_Y)$ is an eigensheaf of rank one corresponding 
to the eigenvalue 
$\zeta^{-1}$ of 
$$h_*\omega_{V/Y}\left((B^h_V)^{=1}\right)\simeq 
\Gr^0_F\!\left((\mathcal V^d_Y)^*\right)$$ 
by the group action of $G=\mathbb Z/b\mathbb Z$, 
where $\zeta$ is a fixed primitive $b$-th root of unity 
(see the proof of \cite[Proposition 6.3]{fujino-slc-trivial}). 
We take an integer $l$ such that 
\begin{equation*}
\mathcal O_Y(M_Y)\subset W_l\Gr^0_F\!\left((\mathcal V^d_Y)^*\right) 
\ \ 
\text{and} \quad 
\mathcal O_Y(M_Y)\not \subset W_{l-1}\Gr^0_F\!\left((\mathcal V^d_Y)^*\right)
\end{equation*} 
hold.  
Thus we can easily see that 
$\mathcal O_Y(M_Y)$ is an eigensheaf of rank one corresponding 
to the eigenvalue $\zeta^{-1}$ of $W_l\Gr^0_F\!\left((\mathcal V^d_Y)^*\right)$ 
and that $\mathcal O_Y(M_Y)\cap W_{l-1}\Gr^0_F\!\left((\mathcal V^d_Y)^*\right)=\{0\}$ 
in $W_l\Gr^0_F\!\left((\mathcal V^d_Y)^*\right)$.  We note that 
$G$ acts on $W_m\Gr^0_F\!\left((\mathcal V^d_Y)^*\right)$ for every $m$ by 
Lemma \ref{x-lem4.6} and Remark \ref{x-rem4.7}. 
Since $\deg M_Y=0$ by assumption, 
$\mathcal O_Y(M_Y)|_{Y^*}$ defines a local subsystem of 
$\Gr^W_l\!\left((\mathcal V^d_{Y^*})^*\right)$ 
by Lemma \ref{d-lem4.7}. 
We note that 
$$\Gr^W_l\Gr^0_F\!\left((\mathcal V^d_{Y^*})^*\right)\simeq 
\Gr^0_F\Gr^W_l\!\left((\mathcal V^d_{Y^*})^*\right)
=F^0\Gr^W_l\!\left((\mathcal V^d_{Y^*})^*\right)\subset 
\Gr^W_l\!\left((\mathcal V^d_{Y^*})^*\right)$$ 
holds since we have 
$F^1\Gr^W_l\!\left((\mathcal V^d_{Y^*})^*\right)=0$ by 
the construction of the dual Hodge filtration (see \cite[Remark 3.15]
{fujino-fujisawa} and Remark \ref{x-rem4.7}). 
Therefore, there exists a positive integer $a$ 
such that $\mathcal O_Y(aM_Y)|_{Y^*}\simeq 
\mathcal O_{Y^*}$ by \cite[Corollaire (4.2.8) (iii) b)]{deligne}. 
This is because $\Gr^W_l\!\left((\mathcal V^d_{Y^*})^*\right)$ is a polarizable 
variation of $\mathbb Q$-Hodge structure. 
Thus we get $\mathcal O_Y(aM_Y)\simeq 
\mathcal O_Y$ by taking the canonical extension. 
This is what we wanted. 
\end{step} 
Hence, we obtain $\mathbf M_{Y'}\sim _{\mathbb Q}0$. 
\end{proof}

We close this section with the proof of 
Corollary \ref{B-cor1.4}. 

\begin{proof}[Proof of Corollary \ref{B-cor1.4}] 
By \cite[Lemma 4.12]{fujino-slc-trivial}, 
we may assume that $Y$ is a smooth projective 
curve. We always have $\deg M_Y\geq 0$ since $M_Y$ 
is nef by \cite[Theorem 1.2]{fujino-slc-trivial}. 
If $\deg M_Y>0$, 
then it is obvious that 
$M_Y$ is ample. 
If $\deg M_Y=0$, then 
$M_Y$ is numerically trivial. 
In this case, by Theorem \ref{B-thm1.3}, 
$M_Y\sim _{\mathbb Q}0$ holds. 
Therefore, we see that $M_Y$ is always 
semi-ample. 
\end{proof}


\end{document}